\newcommand{\brc}[1]{  \left\{#1\right\} } 
\newcommand{\norm}[1]{  \left\|#1\right\| } 
\newcommand{\pare}[1]{\left(#1\right)}    
\newcommand{\core}[1]{  \left[#1\right] }      
\newcommand{\abs}[1]{  \left\vert#1\right\vert }     
\newcommand{\bb}[1]{\mathbb{#1}}   
\newcommand*{\R}{\mathbb R}
\newtheorem{lemma}{Lemma}[section]
\newtheorem{cor}{Corollary}[section]
\newtheorem{theorem}{Theorem}[section]
\newtheorem{remark}{Remark}[section]
\begin{document}
	
	\title{\bf Homogenization for nonlocal problems with smooth kernels}
	
	\author{Monia Capanna, Jean C. Nakasato, Marcone C. Pereira and
		Julio D. Rossi}
		
		\date{}
	
	\maketitle

	{Keywords: heterogeneous media, homogenization, nonlocal equations, Neumann problem, Dirichlet problem.\\
		\indent 2020 {\it Mathematics Subject Classification.} 45A05, 45C05, 45M05.}
	
	\begin{abstract} 
		In this paper we consider the homogenization problem for a nonlocal
		equation that involve different smooth kernels. We assume that the spacial domain 
		is divided into a sequence of two subdomains $A_n \cup B_n$ and we have three different smooth
		kernels, one that controls the jumps from $A_n$ to $A_n$, a second one that controls
		the jumps from $B_n$ to $B_n$ and the third one that governs the interactions between $A_n$ and $B_n$.
		Assuming that $\chi_{A_n} (x) \to X(x)$ weakly-* in $L^\infty$ (and then $\chi_{B_n} (x) \to (1-X)(x)$ weakly-* in $L^\infty$)
		as $n \to \infty$ 
		we show that there is an homogenized limit system in which the three kernels and the limit function $X$ appear. 
		We deal with both Neumann and Dirichlet boundary conditions. Moreover, we also provide
		a probabilistic interpretation of our results.  
	\end{abstract}

	\section{Introduction}
	\label{Sect.intro}

	Our main goal in this paper is to study the homogenization that occurs when one deals with nonlocal problems with 
	different non-singular kernels in that act in different domains.

	Consider a partition of the ambient space into two subdomains $A$, $B$, and consider a nonlocal problem
	in which we have three different smooth kernels. One (that we call $J$) that measure the probability of jumping
	from $A$ to $A$ ($J(x,y)$ is the probability that a particle that is at $x\in A$ moves to $y\in A$), 
	another one ($G$) that is involved in jumps from $B$ to $B$ and a third one ($R$)
	that gives the interactions between $A$ and $B$.  When we look at the stationary problem associated with this
	jump
	process with three different kernels, we obtain a nonlocal problem of the form
	\begin{equation*} \label{1.1.intro}
	\begin{array}{rl}
	\displaystyle & f(x) = \displaystyle \chi_{A} (x) \int_{A} J (x,y) (u (y) - u (x)) dy  
	+ \chi_{A} (x) \int_{B} R (x,y) (u (y) - u (x)) dy
	\\[10pt]
	& \displaystyle 
	\qquad \qquad + \chi_{B} (x) \int_{A} R (x,y) (u (y) - u (x)) dy 
	+ \chi_{B} (x) \int_{B} G (x,y) (u (y) - u (x)) dy.
	\end{array}
	\end{equation*}
	Here $\chi_{A}$ and $\chi_{B}$ denote the characteristic functions of $A$ and $B$ respectively.
	
	This equation can also be written as a system,
	\begin{equation*} \label{1.1.intro.sys}
	\begin{array}{ll}
	\displaystyle f(x) = \displaystyle  \int_{A} J (x,y) (u (y) - u (x)) dy 
	+  \int_{B} R (x,y) (u (y) - u (x)) dy, \qquad & x \in A, 
	\\[10pt]
	\displaystyle 
	f(x) =   \int_{A} R (x,y) (u (y) - u (x)) dy
	+ \int_{B} G (x,y) (u (y) - u (x)) dy, \quad  & x \in B.
	\end{array}
	\end{equation*}
	
	Remark that the involved kernels can be of convolution type, that is, we could have
	for instance, $J(x,y) = J(x-y)$ (this special form of the kernels is often used in applications). 
	However, we only use in our arguments that the kernels $V =J$, $G$ and $R$ 
	are non-singular functions which satisfy the following hypotheses that will be assumed from now on
	$$
	{\bf (H)} \qquad 
	\begin{gathered}
	V \in \mathcal{C}(\R^N\times \R^N,\R) \textrm{ is non-negative with } V(x,x)>0, \; V(x,y) = V(y,x) \textrm{ for every $x,y \in \R^N$, and } \\
	\int_{\R^N} V(x,y) \, dy = 1.
	\end{gathered}
	$$

	Now, in this setting, we consider a sequence of partitions, $A_n$, $B_n$, of a fixed domain $\Omega$,
	$$
	\Omega = A_n \cup B_n,
	$$
	 and assume that we have weak convergence of the characteristic functions of $A_n$ and $B_n$ as $n \to \infty$,
	\begin{equation} \label{convergencia}
	\chi_{A_n} (x) \to X(x) \quad \textrm{ weakly-* in } L^\infty (\Omega),
	\end{equation}
	and consequently, we have 
	$$
	\chi_{B_n} (x) \to (\chi_\Omega - X)(x) \quad \textrm{ weakly-* in } L^\infty (\Omega).
	$$
	
	Our main goal is to pass to the limit as $n \to \infty$ in the sequence of solutions, $u_n$, to the nonlocal 
	problem associated with the partition $A_n$, $B_n$. We consider both the homogeneous Neumann and Dirichlet problems. 
	Our main results can be summarized as follows (see Section \ref{sect-Main-results} for more precise statements):
	
	\begin{theorem}[Main Theorem]
	{\it For any partition $A_n$, $B_n$, there exists a unique solution to our nonlocal problem. Assuming 
	 \eqref{convergencia}, 
	there is a weak limit of $u_n$ in $L^2$ as $n\to \infty$ and this limit is characterized as being
	the sum of the two components of an associated limit system for which we have uniqueness. 
	Moreover, we show that there
	is a corrector type result, that is, if we add an extra term (the corrector) to the sequence $u_n$ we 
	obtain strong convergence in $L^2$. 
	}
	\end{theorem}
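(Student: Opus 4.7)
My plan is to proceed in four linked steps: (i) well-posedness of the nonlocal problem for each fixed $n$; (ii) uniform $L^{2}$ bounds and extraction of weak limits; (iii) passage to the limit in the equation, identifying the homogenized system; and (iv) uniqueness of that system together with the corrector. For step (i), introduce the aggregated kernel
\[
\begin{aligned}
K_{n}(x,y) &= \chi_{A_{n}}(x)\chi_{A_{n}}(y)J(x,y) + \chi_{A_{n}}(x)\chi_{B_{n}}(y)R(x,y) \\
&\quad + \chi_{B_{n}}(x)\chi_{A_{n}}(y)R(x,y) + \chi_{B_{n}}(x)\chi_{B_{n}}(y)G(x,y),
\end{aligned}
\]
and set $h_{n}(x) := \int_{\Omega}K_{n}(x,y)\,dy$. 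The equation rewrites as $h_{n}u_{n} - T_{n}u_{n} = -f$, where $T_{n}u(x) := \int_{\Omega} K_{n}(x,y)u(y)\,dy$ is compact on $L^{2}(\Omega)$ by continuity of $J,G,R$ and boundedness of $\Omega$. The Fredholm alternative then yields unique solvability for the Dirichlet problem and solvability up to additive constants, under the usual zero-mean compatibility condition on $f$, for the Neumann problem. Testing the equation with $u_{n}$ together with the lower bound coming from $V(x,x)>0$ supplies a coercivity estimate $\|u_{n}\|_{L^{2}} \le C\|f\|_{L^{2}}$ that is uniform in $n$.

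\textbf{Passing to the limit.} Define $u_{n}^{A} := \chi_{A_{n}}u_{n}$ and $u_{n}^{B} := \chi_{B_{n}}u_{n}$, and extract a subsequence so that $u_{n}^{A}\rightharpoonup w_{1}$ and $u_{n}^{B}\rightharpoonup w_{2}$ weakly in $L^{2}(\Omega)$; in particular $u_{n}=u_{n}^{A}+u_{n}^{B}\rightharpoonup u=w_{1}+w_{2}$. Multiplying the equation by $\chi_{A_{n}}$ and by $\chi_{B_{n}}$ and expanding, every term reduces to one of two shapes: $\chi_{A_{n}}(x)\int V(x,y)v_{n}(y)\,dy$ (or with $B_{n}$ in place of $A_{n}$) and $v_{n}(x)\int V(x,y)\chi_{A_{n}}(y)\,dy$ (or with $B_{n}$), where $V$ is one of $J,G,R$ and $v_{n}$ is one of $u_{n}^{A},u_{n}^{B},\chi_{A_{n}},\chi_{B_{n}}$. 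The pivotal observation is that for continuous $V$ the integral operator $v\mapsto \int_{\Omega} V(\cdot,y)v(y)\,dy$ is compact on $L^{2}(\Omega)$, so it maps weakly convergent sequences to strongly convergent ones. Combining a strongly convergent factor with a weakly-$*$ convergent factor passes to the limit in each product and produces a coupled limit system for $(w_{1},w_{2})$ in which $X$ and $1-X$ enter as coefficients.

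\textbf{Uniqueness and corrector; main obstacle.} For uniqueness of the limit system, set $f=0$ and exploit the symmetry $V(x,y)=V(y,x)$ together with the positivity of $J,G,R$ to derive an energy-type identity forcing $w_{1}=w_{2}=0$ (up to additive constants in the Neumann case). For the corrector, I would construct $\tilde u_{n} := \chi_{A_{n}}\hat w_{1}+\chi_{B_{n}}\hat w_{2}$ from a suitable lifting of $(w_{1},w_{2})$, substitute $\tilde u_{n}$ into the equation, and verify by the same compactness argument that the residual vanishes strongly in $L^{2}(\Omega)$. Applying the uniform coercivity from step (i) to $u_{n}-\tilde u_{n}$ then gives $\|u_{n}-\tilde u_{n}\|_{L^{2}}\to 0$. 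The hardest point is the construction of the corrector on regions where $X$ degenerates to $0$ or $1$: the naive pointwise formulae $w_{1}/X$ and $w_{2}/(1-X)$ are inadmissible there, and one has to work through the weak formulation, or use a truncation/regularisation device, to close the strong-convergence estimate.
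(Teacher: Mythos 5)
Your four-step architecture (well-posedness, uniform bounds, limit identification, uniqueness plus corrector) matches the paper's, and the central compactness observation --- that $v\mapsto\int_\Omega V(\cdot,y)v(y)\,dy$ with $V$ continuous turns weak convergence into strong, so each product of a weakly convergent sequence with a ``smoothed'' factor passes to the limit --- is exactly the mechanism the paper uses, though the paper carries it out by hand via dominated convergence on $\int_\Omega\chi_{A_n}(y)V(\cdot,y)\,dy$ rather than invoking operator compactness abstractly. Your Fredholm route to existence is a legitimate alternative to the paper's Lax--Milgram argument; both ultimately rest on the same coercivity input.

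There are, however, two places where you underestimate what must actually be proved. First, the uniform-in-$n$ bound $\|u_n\|_{L^2}\le C\|f\|_{L^2}$ does \emph{not} follow merely from $V(x,x)>0$: it requires a nonlocal Poincar\'e-type inequality with a constant independent of the partition $A_n,B_n$, namely that the quantity
\[
\lambda_n(\Omega)=\tfrac12\inf_{u\in W,\,u\ne 0}\frac{\Phi(A_n,A_n,J)u+2\Phi(A_n,B_n,R)u+\Phi(B_n,B_n,G)u}{\|u\|_{L^2}^2}
\]
is bounded below by a positive constant uniformly in $n$ (the paper's Lemma on $\lambda_n(\Omega)$). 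This is a genuine lemma, not a one-liner, and it fails if $R\equiv0$ (take $u_n=\tfrac{n}{|A_n|}\chi_{A_n}-\tfrac{n}{|B_n|}\chi_{B_n}$); your outline should state the nondegeneracy of $R$ as a needed hypothesis.

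Second, and more seriously, your uniqueness step is not a proof. Saying ``exploit symmetry and positivity to derive an energy identity'' does not close the argument, because if you test the limit system with $w_A$ and $w_B$ directly, the $J$-term becomes $\int\!\!\int J(x,y)[X(x)w_A(x)w_A(y)-X(y)w_A(x)^2]$, which does \emph{not} symmetrize into $-\tfrac12\int\!\!\int J(x,y)\,(\cdots)^2$ because of the asymmetric appearance of $X(x)$ versus $X(y)$. The paper's crucial device is to test with the weighted functions $w_A/X$ and $w_B/(1-X)$ (after disposing of the sets $\{X=0\}$ and $\{X=1\}$), which converts each bilinear term into a negative multiple of a weighted square, e.g.\ $-\tfrac12\int\!\!\int J(x,y)X(x)X(y)\bigl[(w_A/X)(y)-(w_A/X)(x)\bigr]^2$; summing the two equations then yields a sum of squares that must vanish, giving $w_A/X=w_B/(1-X)=\mathrm{const}$, and the mean-zero constraint kills the constant. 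You need to supply this weighting explicitly; without it the energy argument stalls. (On the corrector: you correctly identify the degeneracy of $u_A/X$, $u_B/(1-X)$ near $\{X=0\}\cup\{X=1\}$; note the paper does not resolve this either, but simply assumes $c_0\le X\le 1-c_1$ as a hypothesis, and in the Neumann case subtracts the mean of $\omega_n$ to stay in $W$.)
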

	
	For this limit homogenization procedure we have an interpretation in probabilistic terms analyzing the evolution of a particle that moves in 
	$\overline{\Omega}$, see Section \ref{sect-Main-results}.
	
Now let us end with brief description of previous results and comment on the ideas and difficulties involved in our proofs. 

Nonlocal equations with smooth kernels like the ones considered here has been widely studied and used in the literature 
as models in different applied scenarios, see for example, \cite{ElLibro,BCh,CF,Chasseigne-Chaves-Rossi-2006,crew,delia1,delia2,delia3,F,Gal}.
Here we have a model in which the jumping probabilities depends on three different kernels $J$, $G$ and $R$
that act in different parts of the domain (thus, our model problem can be seen as a coupling between nonlocal equations in $A$ and $B$).
For other couplings (even considering local equations and nonlocal ones, we refer to \cite{CaRo,delia1,delia2,delia3,Du,GQR,Gal,Kra}.

Homogenization for PDEs is by now a classical subject that originated in the study of the behaviour
of the solutions to elliptic and parabolic local equations with highly oscillatory coefficients (periodic homogenization). 
We refer to
\cite{BLP,TT, Tar} as general references for the subject.
For other kinds of homogenization for pure nonlocal problems with one kernel we
refer to \cite{marc1,marc2,marc3}. For homogenization results for equations with a singular 
kernel we refer to \cite{Ca,sw2,Wa} and references therein. We emphasize that those references
deal with homogenization in the coefficients involved in the equation. For random homogenization of an obstacle problem we refer to \cite{caffa2}. 
For mixing local and nonlocal processes we refer to \cite{CaRo}.
Here we deal with an homogenization problem that is different in nature with the ones treated in the previously mentioned references
as we homogenize mixing three different jump operators with smooth kernels. 

%Notice that in this homogenization procedure
%there is a nonlocal system instead of a single equation
%(the sum of $u_A$ and $u_B$ does not verify a single equation). This is due to the fact that
%the original problem can be written as a system. 

Finally, let us describe the main ingredients that appear in the proofs. First, we show 
weak convergence along subsequences of $u_n$, $\chi_{A_n}u_n$ and $\chi_{B_n}u_n$ (these convergences comes from
a uniform bound in $L^2$). Next, we find the system that these limits verify. This part of the proof
is delicate since we have to  
pass to the limit in the eek form of the equation that involves terms like $\chi_{B_n} (x) \chi_{A_n} (y) u_n (y) J(x-y)$ and we only have weak convergence of $\chi_{B_n}$ and
$\chi_{A_n} u_n$. 
Here we need to rely on the continuity of $J$ and use the fact that the product
$\chi_{B_n} (x) \chi_{A_n} (y) u_n(y) J(x-y)$ involves two different variables, $x$ and $y$. Finally, we show
uniqueness of the limit by proving uniqueness of solutions to the limit system.

	\medskip
	
	The paper is organized as follows: in Section \ref{sect-Main-results} we state our main
	results; in Section \ref{sect-esto} we provide a probabilistic interpretation to both, Neumann and Dirichlet problems; Section \ref{Sect.proofs} we 
	deal with the Neumann case and, finally, in Section \ref{Sect.proofs.Dir} we consider the Dirichlet problem.

	\section{Statements of the main results} \label{sect-Main-results}

	\subsection{The Neumann problem} We first deal with Neumann boundary conditions.
	The domain $\Omega$ is divided in two subdomains $A_n$, $B_n$ depending on a parameter $n$,
	$$
	\Omega = A_n \cup B_n, \qquad A_n \cap B_n = \emptyset.
	$$
	Assume that \eqref{convergencia} holds.
	For each $n$,
	we consider,
	\begin{equation} \label{1.1}
	\begin{array}{rl}
	\displaystyle & f(x) = \displaystyle \chi_{A_n} (x) \int_{A_n} J (x,y) (u_n (y) - u_n (x)) dy  
	+ \chi_{A_n} (x) \int_{B_n} R (x,y) (u_n (y) - u_n (x)) dy
	\\[10pt]
	& \displaystyle 
	\qquad \qquad + \chi_{B_n} (x) \int_{A_n} R (x,y) (u_n (y) - u_n (x)) dy 
	+ \chi_{B_n} (x) \int_{B_n} G (x,y) (u_n (y) - u_n (x)) dy,
	\end{array}
	\end{equation}
	for $x \in \Omega$.
	
	Associated with this nonlocal problem we have an energy
	\begin{equation} \label{1.1.energy}
	\begin{array}{rl}
	\displaystyle E_n(u) = & \displaystyle \frac14 \int_{A_n}  \int_{A_n} J (x,y) (u (y) - u (x))^2 dy \, dx  
	+ \frac14 \int_{B_n}  \int_{B_n} G (x,y) (u (y) - u (x))^2 dy \, dx 
	\\[10pt]
	& \displaystyle 
	+ \frac12 \int_{A_n}  \int_{B_n} R (x,y) (u (y) - u (x))^2 dy \, dx - \int_\Omega f (x) u (x) \, dx .
	\end{array}
	\end{equation}
	
	Existence and uniqueness of the solutions to \eqref{1.1} which is also a minimizer of \eqref{1.1.energy} in 
	\begin{equation}\label{SetW_solutions}
	W= \left\{u \in L^2 (\Omega) \ : \ \int_\Omega u =0 \right\}
	\end{equation}
	is shown in Theorem \ref{EUN} assuming $f \in W$. Next, we deal with
	the homogenization procedure and study the limit as $n\to \infty$.

	\begin{theorem} \label{theo1.intro} 
		Let $\{u_n\} \subset W$ be a family of solutions of \eqref{1.1}. 
		Then, there exists a unique $(u_A, u_B) \in L^2(\Omega) \times L^2(\Omega)$ with $\int_\Omega \{ u_A(x) + u_B(x) \} dx =0$ and  
		\begin{equation*}
		\begin{gathered}
		\chi_{A_n}u_n\rightharpoonup u_A \quad \textrm{ and } \quad 
		\chi_{B_n}u_n\rightharpoonup u_B \quad \textrm{ weakly in } L^2(\Omega).
		\end{gathered}
		\end{equation*}
		The pair $(u_A, u_B)$ is characterized as being the unique solution to the following system 
		\begin{equation} \label{limitAn}
		\begin{array}{rl}
		X(x) f(x) = & \displaystyle\int_{\Omega}J(x,y)\left[X(x)u_A(y)-X(y)u_A(x) \right]dy \\[10pt]
		& \displaystyle 
		+  \int_\Omega R (x,y) \left[X(x)u_B (y)-u_A(x)(1-X(y)) \right] dy ,
		\end{array}
		\end{equation}
		and 
		\begin{equation}  \label{limitBn}
		\begin{array}{rl}
		(1-X(x)) f(x) = 
		& \displaystyle\int_\Omega  R (x,y) \left[u_A (y)(1-X(x)) -u_B(x)X(y) \right]dy
		\\[10pt]
		& + \displaystyle\int_\Omega G (x,y)\left[ u_B (y)(1-X(x))-u_B(x)(1-X(y))\right]  dy.
		\end{array}
		\end{equation}
	\end{theorem}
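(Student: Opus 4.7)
First, I would derive a uniform $L^2(\Omega)$-bound on $\{u_n\}$. Since $u_n$ minimizes $E_n$ on $W$, testing \eqref{1.1} against $u_n$ gives $Q_n(u_n)=\int_\Omega f\,u_n$, where $Q_n$ is the nonnegative quadratic form underlying \eqref{1.1.energy}. The hypothesis $V(x,x)>0$ with $V$ continuous, applied to each of $J,G,R$, implies that the combined kernel controlling $Q_n$ is bounded below by a positive constant on a tubular neighbourhood of the diagonal, \emph{uniformly} in $n$. A nonlocal Poincaré-type inequality on the mean-zero subspace $W$ then yields $\|u_n\|_{L^2}\le C\|f\|_{L^2}$ with $C$ independent of $n$. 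Hence $u_n$, $\chi_{A_n}u_n$ and $\chi_{B_n}u_n$ are uniformly bounded in $L^2(\Omega)$, and by Banach--Alaoglu we extract (along a subsequence) weak limits $u_A,u_B\in L^2(\Omega)$; the mean-zero constraint passes to the limit and gives $\int_\Omega(u_A+u_B)=0$.

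The heart of the argument is passing to the limit in \eqref{1.1}. Testing against $\chi_{A_n}\varphi$ with $\varphi\in C(\overline{\Omega})$ produces four double-integral terms of the generic form
\[
\int_\Omega\!\int_\Omega \chi_{\alpha_n}(x)\,\chi_{\beta_n}(y)\,V(x,y)\,\varphi(x)\,u_n(y)\,dy\,dx,
\]
plus analogues with $u_n(x)$ in place of $u_n(y)$, where $\alpha,\beta\in\{A,B\}$ and $V\in\{J,R\}$. The hard part is here: $\chi_{\alpha_n}$ converges only weakly-$*$ while $\chi_{\beta_n}u_n$ converges only weakly in $L^2$, so the product does not pass to the limit directly. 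I would circumvent this by approximating $V$ uniformly on $\overline{\Omega}\times\overline{\Omega}$ by tensor sums $\sum_{k=1}^{K}g_k(x)h_k(y)$ (Stone--Weierstrass). For each summand the double integral factorizes as
\[
\Big(\int_\Omega \chi_{\alpha_n}(x)\,\varphi(x)\,g_k(x)\,dx\Big)\Big(\int_\Omega \chi_{\beta_n}(y)\,u_n(y)\,h_k(y)\,dy\Big),
\]
in which each factor passes to the limit independently (the first by weak-$*$ convergence of $\chi_{\alpha_n}$, the second by weak $L^2$ convergence of $\chi_{\beta_n}u_n$). Controlling the Stone--Weierstrass error via the uniform $L^2$-bound on $u_n$ and extending from $C(\overline{\Omega})$ to $L^2(\Omega)$ test functions by density, the four terms reassemble into the weak form of \eqref{limitAn}. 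Testing symmetrically with $\chi_{B_n}\varphi$ yields \eqref{limitBn}.

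It remains to show uniqueness of $(u_A,u_B)$ for the system \eqref{limitAn}--\eqref{limitBn} under the constraint $\int_\Omega(u_A+u_B)=0$. By linearity I set $f\equiv 0$: testing \eqref{limitAn} with $u_A$ and \eqref{limitBn} with $u_B$, adding the two, and symmetrizing in $(x,y)$ produces a limiting nonnegative quadratic expression whose vanishing, combined with the positivity of $J$, $G$, $R$ near the diagonal, forces $u_A$ and $u_B$ to be constants on the effective supports $\{X>0\}$ and $\{X<1\}$; the zero-mean constraint then forces them to vanish identically. Uniqueness of the limit upgrades the subsequential weak convergence of the first step to convergence of the full sequence. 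The principal obstacle throughout is the decoupling performed in the second paragraph; the smoothness of the kernels, exploited through the tensor-product approximation, is precisely what makes this decoupling possible and is therefore essential to the argument.
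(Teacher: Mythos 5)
Your overall structure matches the paper: uniform $L^2$ bound via a nonlocal Poincar\'e inequality, extraction of weak limits, passage to the limit in the variational identity tested against $\chi_{A_n}\varphi$ and $\chi_{B_n}\varphi$, and uniqueness. Two comments.

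\textbf{On the passage to the limit.} Your Stone--Weierstrass tensor-approximation is a valid alternative, but it is a genuinely different route from the paper's. The paper instead observes that, by continuity of the kernel, the maps
$x\mapsto \int_\Omega V(x,y)\,\chi_{\beta_n}(y)\,dy$ and $x\mapsto \int_\Omega V(x,y)\,\chi_{\beta_n}(y)u_n(y)\,dy$
converge \emph{strongly} in $L^2(\Omega)$ (pointwise convergence for each fixed $x$, using that $V(x,\cdot)\in L^2$, plus a uniform bound and dominated convergence). Each offending product is then a pairing of a weakly convergent sequence ($\chi_{\alpha_n}\varphi$ or $\chi_{\alpha_n}\varphi u_n$) with a strongly convergent one, which passes to the limit directly. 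Both methods exploit the continuity of $V$, but the paper's bypasses the approximation error bookkeeping and the density step you need to extend from $C(\overline\Omega)$ to $L^2(\Omega)$ test functions.

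\textbf{On uniqueness: there is a genuine gap.} You propose testing \eqref{limitAn} against $u_A$ and \eqref{limitBn} against $u_B$ and claim the resulting quadratic form is nonnegative. It is not. After symmetrizing, the $J$-contribution becomes
\begin{equation*}
\tfrac12\int_\Omega\!\int_\Omega J(x,y)\big[w_A(x)-w_A(y)\big]\big[X(x)w_A(y)-X(y)w_A(x)\big]\,dy\,dx,
\end{equation*}
whose integrand, written with $a=w_A(x)$, $b=w_A(y)$, $\xi=X(x)$, $\eta=X(y)$, equals $-\left(\sqrt{\eta}\,a-\sqrt{\xi}\,b\right)^2-\left(\sqrt{\xi}-\sqrt{\eta}\right)^2 ab$; the last term has indeterminate sign, so no sign conclusion follows. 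The fix (and this is the key idea in the paper) is to test \eqref{limitAn} against $w_A/X$ and \eqref{limitBn} against $w_B/(1-X)$ (after first reducing to the set $\{0<X<1\}$, which is harmless since the equations already force $w_A=0$ where $X=0$ and $w_B=0$ where $X=1$). Setting $\psi_A=w_A/X$ and $\psi_B=w_B/(1-X)$, the sum of the two tested identities becomes a manifestly nonpositive quadratic form:
\begin{align*}
0 \ = \ &-\tfrac12\int\!\!\int J(x,y)X(x)X(y)\big[\psi_A(y)-\psi_A(x)\big]^2
-\int\!\!\int R(x,y)X(x)(1-X(y))\big[\psi_B(y)-\psi_A(x)\big]^2\\
&-\tfrac12\int\!\!\int G(x,y)(1-X(x))(1-X(y))\big[\psi_B(y)-\psi_B(x)\big]^2.
\end{align*}
Positivity of the kernels near the diagonal together with $0<X<1$ forces $\psi_A\equiv\psi_B\equiv c$ for a single constant $c$. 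Note also that your conclusion ``$u_A$ and $u_B$ are constants'' is incorrect: one gets $w_A=cX$ and $w_B=c(1-X)$, which are not constants. The zero-mean constraint $\int_\Omega(w_A+w_B)=c\,|\Omega|=0$ then yields $c=0$.
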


	Notice that in this homogenization procedure
there is a nonlocal system instead of a single equation
(the sum of $u_A$ and $u_B$ does not verify a single equation). This is due to the fact that
the original problem can be written as a system. 

As a consequence of the fact that $(u_A, u_B)$ is the unique solution to the \eqref{limitAn}, 
we have that $u_A$ and $u_B$ also satisfy the equation
		\begin{equation} \label{limf}
		\begin{array}{rl}
		f(x) = & \displaystyle\int_{\Omega}J(x,y)\left[X(x)u_A(y)-X(y)u_A(x) \right]dy
		+\displaystyle\int_\Omega G(x,y)\left[ (1-X(x))u_B(y)- (1-X(y))u_B(x) \right] dy \\[10pt]
		& +\displaystyle\int_\Omega R(x,y)\left[(1-X(x))u_A(y)-(1-X(y))u_A(x)+X(x)u_B(y)-X(y)u_B(x)\right] dy. 
		\end{array}
		\end{equation}

Let us now consider the extreme cases $X(x)=0$ or $X(x)=1$ in $\Omega$. 
In this case, the limit problem is given by a scalar equation defined for just one kernel. 
Also, we obtain strong convergence of the solutions.
	\begin{cor} \label{corX=0}
		Assume $X(x)=0$ a.e. $x\in\R^n$. 
		Then, the first component of the limit system is zero,
		$$
		u_A(x)=0\textrm{ a.e. } x \in \Omega,
		$$
		we have strong convergence
		$$u_n \to u_B \qquad \mbox{ strongly in $L^2(\Omega)$,} $$ 
		and the homogenized equation is given by
		\begin{equation*}
		f(x)=\int_\Omega G(x,y)\left( u_B(y)- u_B(x) \right) dy.
		\end{equation*}
	\end{cor}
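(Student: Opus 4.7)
The plan is to identify the limit $(u_A, u_B)$ from Theorem~\ref{theo1.intro} by specializing the system to $X\equiv 0$, and then to upgrade the weak convergence to strong $L^2$ convergence using a uniform pointwise bound on $u_n$ together with an explicit formula obtained from the equation itself.

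Identification of $u_A$ and of the homogenized equation: inserting $X\equiv 0$ into \eqref{limitAn} kills every summand containing $X(x)$ or $X(y)$, leaving the pointwise identity $0=-u_A(x)\int_\Omega R(x,y)\,dy$. Since $R(x,x)>0$ by hypothesis~(H) and $R$ is continuous, $\int_\Omega R(x,y)\,dy>0$ for every $x\in\Omega$, and hence $u_A\equiv 0$. Plugging $X\equiv 0$ and $u_A\equiv 0$ into \eqref{limitBn} annihilates the $R$-integral and reduces the $G$-integral to $\int_\Omega G(x,y)(u_B(y)-u_B(x))\,dy$, which is precisely the claimed scalar equation.

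Strong convergence: the main theorem already provides $\|u_n\|_{L^2(\Omega)}\le C$ uniformly in $n$. Introducing
\begin{equation*}
K_n^A(x):=\int_\Omega[\chi_{A_n}J+\chi_{B_n}R](x,y)\,dy, \qquad K_n^B(x):=\int_\Omega[\chi_{A_n}R+\chi_{B_n}G](x,y)\,dy,
\end{equation*}
the positivity $J(x,x),R(x,x),G(x,x)>0$ from~(H) together with continuity yields $K_n^A,K_n^B\ge c_0>0$ uniformly in $n$ and $x$. Solving \eqref{1.1} for $u_n(x)$ on $A_n$ and on $B_n$ respectively and applying Cauchy--Schwarz, one obtains the pointwise domination $|u_n(x)|\le C(1+|f(x)|)=:h(x)\in L^2(\Omega)$, uniformly in $n$. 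Since $\chi_{A_n}\rightharpoonup 0$ weakly-$\ast$ in $L^\infty$ forces $|A_n|\to 0$, absolute continuity of $\int h^2$ gives $\chi_{A_n}u_n\to 0$ strongly in $L^2$. On $B_n$ the equation provides the explicit identity $u_n(x)=\tilde u_n(x)$ with
\begin{equation*}
\tilde u_n(x):=\frac{1}{K_n^B(x)}\left[\int_\Omega(\chi_{A_n}R+\chi_{B_n}G)(x,y)\,u_n(y)\,dy-f(x)\right],
\end{equation*}
where $\tilde u_n$ is in fact defined for every $x\in\Omega$. For each fixed $x$, pairing $\chi_{A_n}\to 0$, $\chi_{B_n}\to 1$ weakly-$\ast$ against the $L^1$ functions $R(x,\cdot),G(x,\cdot)$ gives $K_n^B(x)\to\int_\Omega G(x,y)\,dy$, and pairing the weak $L^2$-limits of $\chi_{A_n}u_n$ and $\chi_{B_n}u_n$ against $R(x,\cdot),G(x,\cdot)\in L^2$ gives convergence of the numerator to $\int_\Omega G(x,y)u_B(y)\,dy-f(x)=u_B(x)\int_\Omega G(x,y)\,dy$; hence $\tilde u_n(x)\to u_B(x)$ pointwise. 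Since $|\tilde u_n|\le h$, dominated convergence yields $\chi_{B_n}(\tilde u_n-u_B)=\chi_{B_n}(u_n-u_B)\to 0$ in $L^2$. Combined with $\chi_{A_n}u_n\to 0$ and $\chi_{A_n}u_B\to 0$ in $L^2$ (the latter by $|A_n|\to 0$ and $u_B\in L^2$), the splitting $u_n-u_B=\chi_{A_n}(u_n-u_B)+\chi_{B_n}(u_n-u_B)$ delivers $u_n\to u_B$ strongly in $L^2$.

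The main obstacle is this last step: the products $\chi_{A_n}u_n$ and $\chi_{B_n}u_n$ are only weakly convergent, so one cannot pass to the limit in squared norms directly. The key device is to use the equation to rewrite $u_n$ on $B_n$ as the ratio $\tilde u_n$, in which each factor converges pointwise through a single weak-type pairing against a kernel (playing the role of a test function in the $y$ variable), and then to exploit the $L^2$ domination $h$ coming from the equation itself to upgrade pointwise convergence to $L^2$ convergence via dominated convergence.
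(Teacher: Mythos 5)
Your proof is correct, but it takes a genuinely different route from the paper's. The identification of $u_A\equiv 0$ and of the limit equation is the same in both arguments. For the strong convergence, however, the paper uses the coercivity of the bilinear form $a$ from Lemma~\ref{lemmaeigenvalue}: writing $c\|u_n-u_B\|_{L^2}^2\le a(u_n-u_B,u_n-u_B)=a(u_n,u_n)-2a(u_n,u_B)+a(u_B,u_B)$, it evaluates $a(u_n,u_n)=a(u_n,u_B)=\int_\Omega f\,u_B$ from the equation, passes to the limit in $a(u_B,u_B)$ exactly as in the derivation of \eqref{prelimit}, and shows the right-hand side collapses to $0$. Your argument instead uses a pointwise estimate $|u_n|\le C(1+|f|)\in L^2$ obtained by isolating $u_n(x)$ in the equation, observes that weak-$\ast$ convergence $\chi_{A_n}\rightharpoonup 0$ already forces $|A_n|\to 0$, kills $\chi_{A_n}u_n$ by absolute continuity, and then upgrades the weak convergence on $B_n$ by writing $u_n=\tilde u_n$ pointwise, letting the kernels $R(x,\cdot),G(x,\cdot)$ act as $L^2$ test functions at each fixed $x$, and invoking dominated convergence. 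Both are sound. The paper's route is the standard energy trick, shorter and mirroring the corrector proof used later; your route is more elementary and explicit, exploiting the degenerate feature $|A_n|\to 0$ peculiar to $X\equiv 0$. One minor point you should state rather than leave implicit: the uniform lower bounds $K_n^A,K_n^B\ge c_0>0$ rest on the positivity of the kernels near the diagonal together with a lower bound on $|\Omega\cap B_\delta(x)|$ uniformly in $x\in\overline\Omega$, i.e. some mild regularity of $\partial\Omega$; the paper hides the same ingredient inside its appeal to Proposition~3.4 of \cite{ElLibro} in Lemma~\ref{lemmaeigenvalue}.
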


		Notice that an analogous result can be shown assuming $X(x) =1$ in $\Omega$.  
	\begin{cor} \label{corX=1}
		Assume $X(x)=1$ a.e. $x\in\Omega$. 
		Then
		$$
		u_B(x)=0\textrm{ a.e. } x\in\Omega,
		$$
		$$u_n \to u_A \qquad \mbox{strongly in $L^2(\Omega)$,}$$ and the homogenized equation is given by
		\begin{equation*}
		 f(x)=\int_\Omega J(x,y)\left( u_A(y)- u_A(x) \right) dy.
		\end{equation*}
	\end{cor}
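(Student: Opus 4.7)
The plan has two parts. First, identify the limit objects by specializing the system \eqref{limitAn}--\eqref{limitBn} of Theorem \ref{theo1.intro} to $X\equiv 1$. Second, upgrade the weak convergence to strong convergence via a pointwise dominated-convergence argument based on a uniform pointwise envelope for $u_n$.

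For the first part, Theorem \ref{theo1.intro} supplies $\chi_{A_n}u_n\rightharpoonup u_A$ and $\chi_{B_n}u_n\rightharpoonup u_B$ weakly in $L^2(\Omega)$, with $(u_A,u_B)$ the unique solution of \eqref{limitAn}--\eqref{limitBn}. Plugging $X\equiv 1$ into \eqref{limitBn} collapses its right-hand side to $-u_B(x)\int_\Omega R(x,y)\,dy$. Hypothesis $\mathbf{(H)}$ makes $R$ continuous with $R(x,x)>0$, and $\overline\Omega$ is compact, so $x\mapsto\int_\Omega R(x,y)\,dy$ admits a positive lower bound on $\Omega$; hence $u_B\equiv 0$ a.e. Substituting $u_B=0$ and $X\equiv 1$ into \eqref{limitAn} gives the announced scalar equation for $u_A$.

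For the second part, solving \eqref{1.1} pointwise yields, for $x\in A_n$,
\[
u_n(x)\,c_n(x)=\int_{A_n}J(x,y)u_n(y)\,dy+\int_{B_n}R(x,y)u_n(y)\,dy-f(x),\quad c_n(x):=\int_{A_n}J(x,y)\,dy+\int_{B_n}R(x,y)\,dy,
\]
and the symmetric identity on $B_n$ with $J,R$ replaced by $R,G$. Bounding $c_n(x)\ge\int_\Omega\min\{J(x,y),R(x,y)\}\,dy$, which is continuous and strictly positive on $\overline\Omega$ by $\mathbf{(H)}$, furnishes $c_0>0$ with $c_n\ge c_0$ independent of $n$ and $x$. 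Combined with the boundedness of the kernels and the uniform $L^2$ bound on $u_n$, this produces $|u_n(x)|\le C(1+|f(x)|)$ a.e., so $\{u_n\}$ is dominated by a fixed $L^2(\Omega)$ function.

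Finally, testing \eqref{convergencia} against $1\in L^1(\Omega)$ gives $|B_n|\to 0$, so along a subsequence $\chi_{B_{n_k}}\to 0$ a.e., meaning that a.e.\ $x\in\Omega$ eventually belongs to $A_{n_k}$. For such $x$, passing to the limit in the pointwise identity, using the weak convergences of $\chi_{A_n}u_n$, $\chi_{B_n}u_n$, and $\chi_{A_n}$ tested against $J(x,\cdot),R(x,\cdot)\in L^2(\Omega)$, one obtains a limit that coincides with $u_A(x)$ by virtue of the homogenized equation. Dominated convergence then delivers strong $L^2$ convergence along $(n_k)$, and uniqueness of the weak limit extends it to the full sequence. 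The main obstacle is the uniform positive lower bound on $c_n$, which is precisely where hypothesis $\mathbf{(H)}$ is crucially used.
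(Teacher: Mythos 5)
Your argument is correct but takes a genuinely different route from the paper for the strong convergence; the derivation of $u_B\equiv 0$ and of the scalar limit equation from the system \eqref{limitAn}--\eqref{limitBn} is the same in spirit. The paper's strategy (written out for Corollary \ref{corX=0}, with the case $X\equiv 1$ declared analogous) is an energy argument built on the coercivity of the bilinear form $a$ from Lemma \ref{lemmaeigenvalue}: it bounds $c\|u_n-u_A\|_{L^2(\Omega)}^2 \leq a(u_n-u_A,u_n-u_A)$, expands the right-hand side into $a(u_n,u_n)-2a(u_n,u_A)+a(u_A,u_A)$, observes that $a(u_n,u_n)=\int_\Omega f u_n$ and $a(u_n,u_A)=\int_\Omega f u_A$ directly from the weak formulation, and passes to the limit in $a(u_A,u_A)$ using the convergences \eqref{convsolution}--\eqref{convkernel}, so that the three terms telescope to $0$. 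Your route instead solves the strong form of \eqref{1.1} pointwise for $u_n(x)$, uses hypothesis $\mathbf{(H)}$ to get a uniform lower bound on the diagonal coefficient $c_n(x)$ (via $c_n(x)\ge\int_\Omega\min\{J,R\}(x,y)\,dy$ on $A_n$, and the analogous bound on $B_n$) and hence a fixed $L^2$ envelope $|u_n(x)|\le C(1+|f(x)|)$, extracts a subsequence with $\chi_{B_{n_k}}\to 0$ a.e.\ from $|B_n|\to 0$, passes to the limit in the pointwise identity (testing the weak convergences against the continuous kernels $J(x,\cdot),R(x,\cdot)$) to obtain $u_{n_k}(x)\to u_A(x)$ a.e., and closes with dominated convergence and a subsequence-uniqueness argument for the full sequence. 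Both approaches hinge on the same kernel positivity from $\mathbf{(H)}$ — explicitly through $c_n$ in yours, through Lemma \ref{lemmaeigenvalue} in the paper's. The paper's version is shorter, stays at the level of weak formulations, and reuses machinery already established for Theorem \ref{theo1.intro}; your version requires the pointwise representation and the extra observations about $|B_n|$ and $c_n$, but yields a.e.\ subsequential convergence as a bonus and makes the mechanism quite transparent.
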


Both cases, $X(x)=0$ or $X(x)=1$, can be seen as a kind of performing weakly perforations in $\Omega$ setting a prevailing situation at the limit equation, see \cite{marc2}.

	Finally, let us give a corrector result for the solutions of the Neumann problem \eqref{1.1}. In order to do that, we need to assume $1>X(x)>0$ uniformly in $\Omega$. Let
$$
\omega_n (x) = \dfrac{\chi_{A_n}(x) u_A(x)}{X (x)} + \dfrac{\chi_{B_n}(x)u_B(x)}{1-X(x)}.
$$
Then, we have the following theorem.

		\begin{theorem}\label{Corrector.Neumann} Let us assume the there exist two constants $c_0>0$ and $c_1>0$ such that 
		\begin{equation} \label{conX}
		1-c_1 \geq X(x) \geq c_0>0, \quad \forall x \in \Omega.
		\end{equation}
			Then
			\begin{equation} \label{n630}
			\left\|u_n - \left(\omega_n - \int_\Omega \omega_n \, dx \right) \right\|_{L^2(\Omega)}\to 0 \quad \textrm{ as } n \to +\infty.
			\end{equation}
		\end{theorem}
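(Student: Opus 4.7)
The plan is to produce the corrector by writing down explicit pointwise formulas for both $u_n$ (from \eqref{1.1}) and for $u_A/X$, $u_B/(1-X)$ (from the limit system \eqref{limitAn}--\eqref{limitBn}) and comparing them term by term. Solving \eqref{1.1} for $u_n(x)$ when $x\in A_n$ gives
$$
u_n(x)=\frac{\int_\Omega J(x,y)\chi_{A_n}(y)u_n(y)\,dy+\int_\Omega R(x,y)\chi_{B_n}(y)u_n(y)\,dy-f(x)}{\int_\Omega J(x,y)\chi_{A_n}(y)\,dy+\int_\Omega R(x,y)\chi_{B_n}(y)\,dy},
$$
and an analogous identity holds on $B_n$. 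Dividing \eqref{limitAn} by $X(x)$, which is legitimate thanks to \eqref{conX}, and rearranging produces
$$
\frac{u_A(x)}{X(x)}=\frac{\int_\Omega J(x,y)u_A(y)\,dy+\int_\Omega R(x,y)u_B(y)\,dy-f(x)}{\int_\Omega J(x,y)X(y)\,dy+\int_\Omega R(x,y)(1-X(y))\,dy},
$$
with a symmetric formula for $u_B/(1-X)$ coming from \eqref{limitBn}.

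The key analytic fact is that, because $J$, $G$, $R$ are continuous on the compact set $\overline\Omega\times\overline\Omega$, the integral operators $g\mapsto \int_\Omega V(\cdot,y)g(y)\,dy$ are Hilbert--Schmidt, hence compact, on $L^2(\Omega)$. Therefore the weak convergences supplied by Theorem \ref{theo1.intro}, namely $\chi_{A_n}u_n\rightharpoonup u_A$, $\chi_{B_n}u_n\rightharpoonup u_B$, and $\chi_{A_n}\rightharpoonup^{*} X$, upgrade to strong convergence once any of these kernels is applied to them. In fact, using Arzel\`a--Ascoli together with the uniform continuity of the kernels on $\overline\Omega^2$ and the uniform $L^2$-bound on $u_n$ coming from Theorem \ref{EUN}, the numerators and denominators of the fraction for $u_n$ converge uniformly on $\overline\Omega$ to those of the fraction for $u_A/X$. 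The denominators are also bounded below by a positive constant independent of $n$ and $x$, since $\chi_{A_n}+\chi_{B_n}=1$ and $V(x,x)>0$ with $V$ continuous yields $\int_\Omega(\chi_{A_n}J+\chi_{B_n}R)\,dy\ge\int_\Omega\min(J,R)\,dy\ge c>0$. Combining these observations I obtain
$$
\chi_{A_n}\!\left(u_n-\frac{u_A}{X}\right)\to 0\quad\text{and}\quad \chi_{B_n}\!\left(u_n-\frac{u_B}{1-X}\right)\to 0\quad\text{in }L^2(\Omega),
$$
and summing, since $\chi_{A_n}+\chi_{B_n}=\chi_\Omega$, gives $u_n-\omega_n\to 0$ strongly in $L^2(\Omega)$.

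To pass from this to the stated \eqref{n630} it suffices to observe that the constant $\int_\Omega\omega_n\,dx$ tends to zero: testing the explicit definition of $\omega_n$ against the constant function $1$, and using $\chi_{A_n}\rightharpoonup^{*}X$ and $\chi_{B_n}\rightharpoonup^{*}1-X$, shows $\int_\Omega\omega_n\,dx\to \int_\Omega u_A\,dx+\int_\Omega u_B\,dx=0$, the last identity being the normalization in Theorem \ref{theo1.intro}. The main obstacle I expect is making the compactness argument truly quantitative, i.e.\ turning the pointwise ratio identity into an $L^2$ convergence: this requires strong (not just weak) convergence of both numerator and denominator, which in turn rests on the Hilbert--Schmidt property combined with the uniform $L^2$ bound on $u_n$. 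The assumption \eqref{conX} plays a crucial role throughout: without uniform lower bounds on $X$ and $1-X$, the denominators in the limit formulas would degenerate and $\omega_n$ itself would fail to lie in $L^2(\Omega)$.
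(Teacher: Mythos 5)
Your proof is correct, and it takes a genuinely different route from the paper's argument. The paper proceeds variationally: it invokes the uniform coercivity of the bilinear form $a$ from \eqref{bforma} together with $a(\,\cdot\,,\text{const})=0$ to reduce \eqref{n630} to showing $a(u_n-\omega_n,\,u_n-\omega_n)\to 0$, and then expands this into six terms, passing to the limit in each one using the weak convergences of $\chi_{A_n}u_n$, $\chi_{B_n}u_n$, $\chi_{A_n}$ and the strong convergence of integrals against the continuous kernels, finally cancelling everything via the limit system \eqref{limitAn}--\eqref{limitBn}. You instead solve \eqref{1.1} pointwise for $u_n$ on $A_n$ and on $B_n$, solve the limit system pointwise for $u_A/X$ and $u_B/(1-X)$ after dividing by $X$ (respectively $1-X$), and compare the two ratios directly using the Hilbert--Schmidt/Arzel\`a--Ascoli compactness of the kernel operators to upgrade the weak convergences to strong (indeed uniform) convergence of the relevant numerators and denominators, together with the uniform lower bound on the denominators coming from hypothesis $({\bf H})$ and $\chi_{A_n}+\chi_{B_n}=\chi_\Omega$. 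Both approaches work; the paper's is the standard energy-method corrector argument, while yours is more explicit and elementary, at the cost of a small amount of care with the $f$ term (which is only $L^2$, so the numerators are not themselves continuous -- only their differences $N_n - N$ are, and your manipulation $\frac{N_n}{D_n}-\frac{N}{D}=\frac{(N_n-N)D+N(D-D_n)}{D_nD}$ handles this correctly since $D_n\to D$ uniformly and $N\in L^2$).

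One small attribution remark: the uniform $L^2$ bound on $u_n$ that you need for the equicontinuity argument is established in the proof of Theorem \ref{theo1.intro} (estimate \eqref{eq270}) via Lemma \ref{lemmaeigenvalue}, rather than in the statement of Theorem \ref{EUN}; and the uniqueness claim in Theorem \ref{theo1.intro} is what allows you to use the full-sequence convergence $\chi_{A_n}u_n\rightharpoonup u_A$ rather than a subsequence, which your argument implicitly relies on. These are minor points and do not affect the validity of your approach.
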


In this result, we have the flavor of the classical corrector approach introduced in \cite{BLP}.

	\subsection{The Dirichlet problem} 
	Now we divide the whole $\mathbb{R}^N$ in two disjoint subdomains $A_n$, $B_n$ depending on a parameter $n$,
	$$
	\mathbb{R}^N = A_n \cup B_n, \qquad A_n \cap B_n = \emptyset.
	$$
	
	Let $\chi_{A_n}$ and $\chi_{B_n}$ be the characteristic functions of $A_n$ and $B_n$ respectively. 
	We will assume 
	\begin{equation*} \label{convergencia_dir}
	\chi_{A_n} (x) \to X \quad \textrm{ weakly-* in } L^\infty (\mathbb{R}^N).
	\end{equation*}
	Consequently,  
	$$
	\chi_{B_n} (x) \to1 - X \quad \textrm{ weakly-* in } L^\infty (\mathbb{R}^N).
	$$
	
	As before, we assume that we have three different kernels
	$J$, $G$ and $R$. 
	The kernel $J$ controls the jumps from $A_n$ to $A_n$, $J(x-y)$ is the probability that a particle that is at $x\in A_n$ moves to $y\in A_n$;
	$G$ from $B_n$ to $B_n$ and finally $R$ controls the jumps from $A_n$ to $B_n$.
	
	{\bf The Dirichlet problem.}
	For $x \in \Omega$ we have
	\begin{equation} \label{1.1.Dir}
	\begin{array}{rl}
	\displaystyle f(x) = & \displaystyle \chi_{A_n} (x) \int_{A_n} J (x,y) (u_n (y) - u_n (x)) dy  
	+ \chi_{A_n} (x) \int_{B_n} R (x,y) (u_n (y) - u_n (x)) dy
	\\[10pt]
	& \displaystyle 
	+ \chi_{B_n} (x) \int_{A_n} R (x,y) (u_n (y) - u_n (x)) dy
	+ \chi_{B_n} (x) \int_{B_n} G (x,y) (u_n (y) - u_n (x)) dy
	\end{array}
	\;  x \in \Omega
	\end{equation}
	and we impose that
	\begin{equation*} \label{1.1.fuera}
	u (x)  \equiv 0, \qquad x \not\in \Omega.
	\end{equation*}

	Associated with this nonlocal problem we have an energy for functions in the space
	\begin{equation}\label{SetW_solutions_Dir}
	W_{Dir} = \Big\{u \in L^2 (\R^N) \ : \ u =0\quad\textrm{in}\quad \mathbb{R}^N \setminus \Omega \Big\}
	\end{equation}
	given by
	\begin{equation} \label{1.1.energy.Dir}
	\begin{array}{rl}
	\displaystyle E_n(u) = & \displaystyle \frac14 \int_{A_n}  \int_{A_n} J (x,y) (u (y) - u (x))^2 dy \, dx 
	 + \frac14 \int_{B_n}  \int_{B_n} G (x,y) (u (y) - u (x))^2 dy \, dx \\[10pt]
	& \displaystyle 
	+ \frac12 \int_{A_n}  \int_{B_n} R (x,y) (u (y) - u (x))^2 dy \, dx - \int_\Omega f (x) u (x) \, dx .
	\end{array}
	\end{equation}
	
	Existence and uniqueness of the solution to \eqref{1.1.Dir} which is obtained as the unique minimizer of \eqref{1.1.energy.Dir} in 
	$
	W_{Dir}$
	is shown in Theorem \ref{EUSD} assuming $f \in L^2(\Omega)$ (there is no need to assume that the mean value of $f$ is zero here).
	
	\begin{theorem} \label{theo1.intro.dir}
		Let $\{u_n\}$ be a family of solutions of \eqref{1.1.Dir}. 
		Then, there exists a unique pair $(u_A, u_B) \in L^2(\Omega) \times L^2(\Omega)$ with $u_A(x) = u_B(x)\equiv0$ for $x\in\R^N\backslash\Omega$ and  
		\begin{equation*}
		\begin{gathered}
		\chi_{A_n}u_n\rightharpoonup u_A \quad \textrm{ and } \quad 
		\chi_{B_n}u_n\rightharpoonup u_B \quad \textrm{ weakly in } L^2(\Omega),
		\end{gathered}
		\end{equation*}
		such that, for $x\in \Omega$, $u_A$ and $u_B$ satisfy the following system 
		\begin{equation} \label{limitAn_dir}
		\begin{array}{rl}
		X(x) f(x) = & \displaystyle\int_{\R^N}J(x,y)\left[X(x)u_A(y)-X(y)u_A(x) \right]dy \\[10pt]
		& \displaystyle 
		+  \int_{\R^N} R (x,y) \left[X(x)u_B (y)-u_A(x)(1-X(y)) \right] dy ,
		\end{array}
		\end{equation}
		and 
		\begin{equation}  \label{limitBn_dir}
		\begin{array}{rl}
		(1-X(x)) f(x) = 
		& \displaystyle\int_{\R^N}  R (x,y) \left[u_A (y)(1-X(x)) -u_B(x)X(y) \right]dy
		\\[10pt]
		& + \displaystyle\int_{\R^N} G (x,y)\left[ u_B (y)(1-X(x))-u_B(x)(1-X(y))\right]  dy.
		\end{array}
		\end{equation}
	\end{theorem}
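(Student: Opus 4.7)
The plan is to follow the same three-step scheme as for the Neumann Theorem \ref{theo1.intro}, adapted to the Dirichlet functional setting $W_{Dir}$: first a uniform $L^2$ bound, then passage to the limit in the weak formulation, and finally uniqueness of the limit system. As in the Neumann case, the main obstacle is the passage to the limit in terms that mix two weakly convergent factors.

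Since each $u_n$ minimizes the energy \eqref{1.1.energy.Dir} over $W_{Dir}$, the inequality $E_n(u_n)\le E_n(0)=0$ combined with the Poincar\'e-type inequality behind Theorem \ref{EUSD} (which exploits the Dirichlet constraint on $\R^N\setminus\Omega$) yields the uniform bound $\|u_n\|_{L^2(\R^N)}\le C\|f\|_{L^2(\Omega)}$. By weak compactness, along a subsequence we obtain $u_n\rightharpoonup u$, $\chi_{A_n}u_n\rightharpoonup u_A$ and $\chi_{B_n}u_n\rightharpoonup u_B$ in $L^2(\R^N)$, with $u=u_A+u_B$; moreover, both $u_A$ and $u_B$ vanish in $\R^N\setminus\Omega$ because each $u_n$ does.

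Next, I would test \eqref{1.1.Dir} against $\phi\in L^2(\Omega)$ and expand the four quadratic terms. Each one has the form
\begin{equation*}
I_n(S,T,K,\phi):=\int_\Omega\!\int_{\R^N}\chi_{S_n}(x)\,\chi_{T_n}(y)\,K(x,y)\,\bigl(u_n(y)-u_n(x)\bigr)\,\phi(x)\,dy\,dx,
\end{equation*}
with $S,T\in\{A,B\}$ and $K\in\{J,G,R\}$. The delicate piece is the one containing $\chi_{S_n}(x)\chi_{T_n}(y)u_n(y)$ when $S\ne T$, because both $\chi_{S_n}$ and $\chi_{T_n}u_n$ converge only weakly, so their product does not pass to the limit directly. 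The trick, as highlighted in the introduction, is that $K$ is continuous and that the two characteristic functions sit in separated variables: setting
\begin{equation*}
h_n(x):=\int_{\R^N}K(x,y)\,\chi_{T_n}(y)\,u_n(y)\,dy,
\end{equation*}
the weak convergence $\chi_{T_n}u_n\rightharpoonup u_T$ tested against $K(x,\cdot)\in L^2(\R^N)$ gives pointwise convergence $h_n(x)\to h(x):=\int_{\R^N}K(x,y)u_T(y)\,dy$, and the uniform bound on $u_n$ together with joint continuity of $K$ make $\{h_n\}$ uniformly bounded and equicontinuous on compacts. The weak-$\ast$ convergence $\chi_{S_n}\to X_S\in\{X,1-X\}$ in $L^\infty$ then passes the limit through $\int_\Omega\chi_{S_n}(x)\phi(x)h_n(x)\,dx$. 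The ``diagonal'' piece containing $u_n(x)$ is handled symmetrically, with the roles of weak and weak-$\ast$ factors interchanged. Grouping the eight resulting limits according to whether $X(x)$ or $1-X(x)$ appears as a prefactor produces exactly the system \eqref{limitAn_dir}-\eqref{limitBn_dir}.

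To identify the limit uniquely and upgrade subsequential convergence to convergence of the entire sequence, I would establish uniqueness of solutions to \eqref{limitAn_dir}-\eqref{limitBn_dir} in $W_{Dir}\times W_{Dir}$. For the homogeneous problem, testing the first equation against $u_A$, the second against $u_B$, summing, and using the symmetry of $J$, $G$ and $R$ (together with standard variable swaps) reassembles a nonnegative quadratic form comparable to the limit energy with $\chi_{A_n}$, $\chi_{B_n}$ replaced by $X$, $1-X$; the Dirichlet condition on $\R^N\setminus\Omega$ and hypothesis $(H)$ then force $u_A\equiv u_B\equiv 0$, which closes the argument.
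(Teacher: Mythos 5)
Your outline of the uniform bound and the two-variable separation argument for passing terms of the form $\int\chi_{S_n}(x)\phi(x)\bigl(\int K(x,y)\chi_{T_n}(y)u_n(y)\,dy\bigr)\,dx$ to the limit is correct and is indeed the mechanism the paper uses. However, there are two substantive gaps.

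First, the claim that ``grouping the eight resulting limits according to whether $X(x)$ or $1-X(x)$ appears as a prefactor produces exactly the system'' is unjustified. Testing \eqref{1.1.Dir} against a general $\phi\in L^2(\Omega)$ and passing to the limit gives only the single \emph{sum} equation (the analogue of \eqref{limf}), which is equivalent to the sum of \eqref{limitAn_dir} and \eqref{limitBn_dir} but not to each one separately; there is no logical step by which one equation between $L^2$ functions can be split into two simply by looking at coefficients, especially since the $R$-terms mix $X$ and $1-X$ prefactors across $u_A$ and $u_B$. The missing idea is to take oscillating test functions $\chi_{A_n}\varphi$ and $\chi_{B_n}\varphi$: since $\chi_{A_n}\chi_{B_n}\equiv0$, only half of the terms in the weak formulation survive, and the limit of $\int_\Omega\chi_{A_n}\varphi f$ becomes $\int_\Omega X\varphi f$, which produces \eqref{limitAn_dir}; symmetrically $\chi_{B_n}\varphi$ gives \eqref{limitBn_dir}. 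This extra step is precisely what the paper does and cannot be omitted.

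Second, your uniqueness argument does not close. Testing the homogeneous version of \eqref{limitAn_dir} against $u_A$ (and of \eqref{limitBn_dir} against $u_B$), then symmetrizing, produces expressions such as $-\tfrac12\int\!\int J(x,y)\bigl[u_A(x)-u_A(y)\bigr]\bigl[X(y)u_A(x)-X(x)u_A(y)\bigr]\,dy\,dx$, which has no definite sign. To obtain genuine squares, one must instead (after reducing WLOG to $0<X<1$ a.e., handling $\{X=0\}$ and $\{X=1\}$ directly from the equations) test against $w_A/X$ and $w_B/(1-X)$ respectively; then the $J$-term becomes $-\tfrac12\int\!\int J(x,y)X(x)X(y)\bigl[(w_A/X)(y)-(w_A/X)(x)\bigr]^2$, and similarly for $G$ and $R$, yielding a manifestly nonnegative form. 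Only with this choice of weights does the argument force $w_A\equiv w_B\equiv 0$.
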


		As a consequence, we have that $u_A$ and $u_B$ also satisfy
		\begin{equation*} \label{limf_dir}
		\begin{array}{rl}
		f(x) = & \displaystyle\int_{\R^N}J(x,y)\left[X(x)u_A(y)-X(y)u_A(x) \right]dy
		+\displaystyle\int_{\R^N} G(x,y)\left[ (1-X(x))u_B(y)- (1-X(y))u_B(x) \right] dy \\[10pt]
		& +\displaystyle\int_{\R^N} R(x,y)\left[(1-X(x))u_A(y)-(1-X(y))u_A(x) +X(x)u_B(y)-X(y)u_B(x)\right] dy .
		\end{array}
		\end{equation*}

	As in the Neumann problem, we obtain from the homogenization procedure a nonlocal system instead of a single equation
since the sum of $u_A$ and $u_B$ does not verify a single one. 

Also, one can notice that the homogenized systems for Neumann and Dirichlet problems present quite similar expressions. They differ by the integration domain and space where the solutions belong.

Next, let us consider a particular partition to the whole $\R^N$. We assume that $\chi_{A_n}\to 1$ as $n\to \infty$. 
Thus, we are assuming $X(x) \equiv 1$ in $\Omega$. The other case $\Omega \subset B_n$ for all $n$ (i.e. $X(x) \equiv 0$ in $\Omega$) is analogous and is left to the reader. 
			\begin{cor} \label{corExD}
				Suppose that 
				$$
				\chi_{A_n}\to 1 \quad \mbox{ as } n\to \infty.
				$$
				Then, $u_n\to u_A$ strongly in $L^2(\Omega)$ and the homogenized equation is given by
				\begin{equation*}
				\begin{array}{rl}
				f(x)=&\displaystyle \int_{\R^N}J(x,y)( u_A(y) - u_A(x) ) dy .
				\end{array}
				\end{equation*}				
			\end{cor}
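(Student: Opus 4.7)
The plan is to specialize Theorem \ref{theo1.intro.dir} to the case $X\equiv 1$ and then upgrade the weak $L^2$ convergence it provides to strong $L^2$ convergence.

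First I would note that since $\chi_{A_n}\in\{0,1\}$ and $\chi_{A_n}\to 1$ weakly-$\ast$ in $L^\infty(\R^N)$, testing against $\chi_\Omega\in L^1$ gives $\int_\Omega(1-\chi_{A_n})\,dx\to 0$; since $(1-\chi_{A_n})^p=1-\chi_{A_n}$, this promotes to strong convergence $\chi_{A_n}\to 1$ (equivalently $\chi_{B_n}\to 0$) in $L^p(\Omega)$ for every $p<\infty$. With $X\equiv 1$, equation \eqref{limitBn_dir} from Theorem \ref{theo1.intro.dir} collapses to $0=-u_B(x)\int_{\R^N}R(x,y)\,dy$, and hypothesis $(H)$ gives $\int R(x,y)\,dy=1$, forcing $u_B\equiv 0$. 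Substituting $X\equiv 1$ and $u_B\equiv 0$ into \eqref{limitAn_dir} then yields directly the claimed scalar equation for $u_A$.

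For the upgrade to strong convergence, I would isolate the $u_n(x)$ diagonal term in \eqref{1.1.Dir}: for $x\in A_n$,
\begin{equation*}
\alpha_n(x)\,u_n(x) \;=\; -f(x) + \int_{\R^N} J(x,y)\,\chi_{A_n}(y)\,u_n(y)\,dy + \int_{\R^N} R(x,y)\,\chi_{B_n}(y)\,u_n(y)\,dy,
\end{equation*}
with $\alpha_n(x)=\int_{A_n}J(x,y)\,dy+\int_{B_n}R(x,y)\,dy$, and an analogous identity on $B_n$ with coefficient $\beta_n(x)=\int_{A_n}R(x,y)\,dy+\int_{B_n}G(x,y)\,dy$. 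The continuity and diagonal positivity of the kernels in $(H)$ give the uniform lower bounds $\alpha_n,\beta_n\geq c>0$ on $\overline\Omega$; and testing the weak-$\ast$ convergence $\chi_{A_n}\to 1$ against the $L^1(\R^N)$ functions $J(x,\cdot),R(x,\cdot),G(x,\cdot)$ gives $\alpha_n,\beta_n\to 1$ pointwise on $\overline\Omega$. The integral operators with continuous kernels $J,R,G$ are Hilbert--Schmidt on $\Omega$, hence compact on $L^2$, so they send $\chi_{A_n}u_n\rightharpoonup u_A$ and $\chi_{B_n}u_n\rightharpoonup 0$ to strongly convergent sequences in $L^2(\Omega)$. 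Consequently, the right-hand side of the displayed identity converges strongly in $L^2(\Omega)$ to $-f+\int J(\cdot,y)u_A(y)\,dy=u_A$ (the equality coming from the limit equation); dividing by $\alpha_n$ (bounded below, so $1/\alpha_n\to 1$ a.e.\ by dominated convergence) and multiplying by $\chi_{A_n}$, together with $\|(\chi_{A_n}-1)u_A\|_{L^2}^2=\int_\Omega\chi_{B_n}u_A^2\to 0$, yields $\chi_{A_n}u_n\to u_A$ strongly in $L^2(\Omega)$; the parallel argument on $B_n$ gives $\chi_{B_n}u_n\to 0$ strongly, whence $u_n\to u_A$ in $L^2(\Omega)$.

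The main technical obstacle is the simultaneous weak convergence of both the multipliers $\chi_{A_n},\chi_{B_n}$ and the products $\chi_{A_n}u_n,\chi_{B_n}u_n$, which a priori blocks passing to the limit in their pointwise products. In this extreme case $X\equiv 1$ the situation is tractable because $\chi_{A_n}\to 1$ automatically strengthens from weak-$\ast$ to strong $L^p(\Omega)$ convergence, while the compactness of the continuous-kernel integral operators converts weak $L^2$ convergence of the integrands into strong $L^2$ convergence of the integrals, closing the bootstrap.
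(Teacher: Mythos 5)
Your proof is correct, but you take a genuinely different route to the strong convergence than the paper does. The paper (by reference to the argument for Corollary \ref{corX=0}) expands the coercive bilinear form $a(u_n - u_A, u_n - u_A)$ via Lemma \ref{lemmaeigenvalue.Dir}, uses the variational identity $a(u_n, v) = \int f v$ to compute $a(u_n,u_n)$ and $a(u_n,u_A)$, and passes to the limit in $a(u_A, u_A)$ with the kernel convergences \eqref{convkernel}; everything cancels at the limit to give $c\lim\|u_n - u_A\|_{L^2}^2 \le 0$. You instead isolate the diagonal term $\alpha_n(x)u_n(x)$ (resp.\ $\beta_n(x)u_n(x)$) from the pointwise equation, observe that the off-diagonal terms are the images of the weakly convergent sequences $\chi_{A_n}u_n$ and $\chi_{B_n}u_n$ under the compact (Hilbert--Schmidt) integral operators with kernels $J,R,G$, and so converge strongly, then divide by the uniformly positive multipliers $\alpha_n,\beta_n$. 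The paper's route is a one-line consequence of the existing coercivity machinery and requires no compactness input beyond what is already proved; yours trades that for a more structural argument exploiting the smoothing effect of the nonlocal operator, which is robust and makes transparent \emph{why} strong convergence holds (the equation itself bootstraps weak to strong convergence). A couple of points you should tighten up if writing this in full: the uniform lower bound $\alpha_n,\beta_n \ge c>0$ needs compactness of $\overline\Omega$ together with hypothesis $(\mathbf H)$ to get a $\delta>0$ and a lower bound on $\min\{J,R,G\}$ near the diagonal uniformly in $x$; and in the step $\chi_{A_n}u_n - u_A = \chi_{A_n}(\mathrm{RHS}_n - u_A)/\alpha_n + u_A\chi_{A_n}(1/\alpha_n-1) + u_A(\chi_{A_n}-1)$, the middle term needs dominated convergence and the last needs the weak-$\ast$ convergence of $\chi_{B_n}$ tested against $|u_A|^2\in L^1$; both work, but they merit being spelled out.
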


We finish this section mentioning the validity of a corrector result for the Dirichlet problem \eqref{1.1.Dir} under the same conditions assumed in 
Theorem \ref{Corrector.Neumann}. 

			\begin{theorem} \label{Corrector.Dirichlet}
				Let us assume the there exist $c_0$ and $c_1>0$ such that 
				\begin{equation*} \label{conX.Dir}
				1-c_1 \geq X(x) \geq c_0>0, \quad \forall x \in \Omega,
				\end{equation*}
				and let us set  
				$$
				\psi_n (x) = \dfrac{\chi_{A_n}(x)u_A(x)}{X(x)} + \dfrac{\chi_{B_n}(x)u_B(x)}{1-X(x)}.
				$$
				Then,
				\begin{equation*} \label{n630.Dir}
				\Big\|u_n - \psi_n \Big\|_{L^2(\Omega)}\to 0 \quad \textrm{ as } n \to +\infty.
				\end{equation*}
			\end{theorem}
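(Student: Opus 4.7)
The argument mirrors the proof of Theorem~\ref{Corrector.Neumann}, with the simplification that in the Dirichlet setting every admissible function already vanishes outside $\Omega$, so no mean-value correction is needed. The decomposition $\chi_{A_n}+\chi_{B_n}=1$ together with disjointness of supports gives
\begin{equation*}
u_n-\psi_n=\chi_{A_n}\Bigl(u_n-\tfrac{u_A}{X}\Bigr)+\chi_{B_n}\Bigl(u_n-\tfrac{u_B}{1-X}\Bigr),
\end{equation*}
so that
\begin{equation*}
\|u_n-\psi_n\|^2_{L^2(\Omega)}=\|u_n\|^2_{L^2(\Omega)}-2\!\int_\Omega\!\Bigl(\chi_{A_n}u_n\tfrac{u_A}{X}+\chi_{B_n}u_n\tfrac{u_B}{1-X}\Bigr)+\int_\Omega\!\Bigl(\chi_{A_n}\tfrac{u_A^2}{X^2}+\chi_{B_n}\tfrac{u_B^2}{(1-X)^2}\Bigr).
\end{equation*}
Condition \eqref{conX} places $u_A/X,\,u_A^2/X^2,\,u_B/(1-X),\,u_B^2/(1-X)^2$ in $L^2(\Omega)$, so the weak convergences $\chi_{A_n}u_n\rightharpoonup u_A$ and $\chi_{B_n}u_n\rightharpoonup u_B$ from Theorem~\ref{theo1.intro.dir} together with $\chi_{A_n}\overset{*}{\rightharpoonup}X$ and $\chi_{B_n}\overset{*}{\rightharpoonup}1-X$ dispose of the last two integrals. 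The task thus reduces to the single norm identity
\begin{equation*}
\lim_{n\to\infty}\|u_n\|^2_{L^2(\Omega)}=\int_\Omega\tfrac{u_A^2}{X}+\int_\Omega\tfrac{u_B^2}{1-X}.
\end{equation*}

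For this identity I would use the variational characterization of $u_n$. Writing $E_n(u)=\tfrac12 B_n(u,u)-\int_\Omega f u$ with $B_n$ the symmetric bilinear form associated with \eqref{1.1.energy.Dir}, the Euler--Lagrange relation $B_n(u_n,\varphi)=\int_\Omega f\varphi$ on $W_{Dir}$ produces
\begin{equation*}
E_n(\psi_n)-E_n(u_n)=\tfrac12 B_n(u_n-\psi_n,u_n-\psi_n),
\end{equation*}
noting $\psi_n\in W_{Dir}$ since $u_A$ and $u_B$ vanish outside $\Omega$. Testing the equation against $u_n$ and using $u_n\rightharpoonup u_A+u_B$ in $L^2(\Omega)$ yields $E_n(u_n)=-\tfrac12\int_\Omega f u_n\to -\tfrac12\int_\Omega f(u_A+u_B)=:E_\infty$; minimality then forces $\liminf E_n(\psi_n)\ge E_\infty$, while a direct expansion of $B_n(\psi_n,\psi_n)$ combined with the identity obtained by testing \eqref{limitAn_dir}--\eqref{limitBn_dir} against $u_A/X$ and $u_B/(1-X)$ (whose sum equals $\int_\Omega f(u_A+u_B)$) furnishes the matching upper bound $\limsup E_n(\psi_n)\le E_\infty$. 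Hence $B_n(u_n-\psi_n,u_n-\psi_n)\to 0$, and a uniform nonlocal Dirichlet-type Poincar\'e inequality $\|v\|^2_{L^2(\Omega)}\le C\,B_n(v,v)$ for $v\in W_{Dir}$ — available because the kernel of $B_n$ dominates $K_{\min}:=\min\{J,G,R\}$, which is strictly positive on the diagonal by hypothesis (H) and satisfies $\inf_{x\in\overline\Omega}\int_{\Omega^c}K_{\min}(x,y)\,dy>0$ for bounded $\Omega$ — closes the argument.

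The delicate step is passing to the limit in $B_n(\psi_n,\psi_n)$: upon expansion one encounters quadruple integrals of the form
\begin{equation*}
\int_{\R^N}\!\!\!\int_{\R^N}\!\chi_{A_n}(x)\chi_{A_n}(y)\,J(x,y)\Bigl(\tfrac{u_A(y)}{X(y)}-\tfrac{u_A(x)}{X(x)}\Bigr)^{\!2}dy\,dx
\end{equation*}
and its analogues involving $G$, $R$, and $\chi_{B_n}$, each featuring a product of two weakly-$*$ converging characteristic functions against a kernel depending jointly on $x$ and $y$. The resolution is the same two-variable continuity device employed in the proof of Theorem~\ref{theo1.intro.dir}: freeze $x$, pass to the limit in the $y$-integral using the weak convergence and the $L^2$-regularity in $y$ of the continuous kernel to obtain a continuous function of $x$, and then pass to the limit in the outer integral against $\chi_{A_n}(x)\overset{*}{\rightharpoonup}X(x)$ or $\chi_{B_n}(x)\overset{*}{\rightharpoonup}1-X(x)$.
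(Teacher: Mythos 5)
Your proposal follows the same route the paper intends: the paper simply refers the reader to the Neumann corrector proof (Theorem~\ref{Corrector.Neumann}), and your energy reformulation $E_n(\psi_n)-E_n(u_n)=\tfrac12 B_n(u_n-\psi_n,u_n-\psi_n)$ is algebraically identical to the paper's direct expansion of $a(u_n-\omega_n,u_n-\omega_n)$, once one unwinds the Euler--Lagrange relation $B_n(u_n,\varphi)=\int_\Omega f\varphi$. The hard step (passing to the limit in $B_n(\psi_n,\psi_n)$, which has products of two weak-$*$ converging characteristic functions against a kernel depending jointly on $x$ and $y$) is resolved exactly as in the paper. The observation that no mean-value correction is needed because $W_{Dir}$ carries no zero-average constraint is also correct.

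Two small issues deserve attention. First, the opening $L^2$ expansion and the ``reduction to the norm identity'' $\|u_n\|^2_{L^2}\to\int_\Omega u_A^2/X+\int_\Omega u_B^2/(1-X)$ is a detour: the energy argument you then run proves $\|u_n-\psi_n\|_{L^2}\to 0$ directly (via coercivity), and the norm identity is a consequence of the theorem rather than an ingredient, so the presentation loops back on itself. Second, the justification you give for the uniform Dirichlet-type Poincar\'e inequality is not correct in general: the condition $\inf_{x\in\overline\Omega}\int_{\Omega^c}K_{\min}(x,y)\,dy>0$ fails whenever $K_{\min}$ has compact support and $\Omega$ is large enough that interior points cannot reach $\Omega^c$ in a single jump; hypothesis $({\bf H})$ only gives positivity of the kernels near the diagonal. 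The needed inequality is precisely the content of Lemma~\ref{lemmaeigenvalue.Dir}, which rests on the chaining argument of \cite[Proposition~2.3]{ElLibro}, and you should simply invoke it instead of attempting a one-jump bound.
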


\section{The stochastic model} \label{sect-esto}

In this section we provide the details of a probabilistic interpretation of our original model
and the obtained homogenization results. To be in a probabilistic context we assume that
\begin{align*}
\int_{\Omega}J(x,y)dy= 1, \quad \int_{\Omega}R(x,y)dy = 1,\quad \int_{\Omega}G(x,y)dy=1, \qquad\forall x\in\overline\Omega.
\end{align*}

\subsection{The Neumann problem}
We want to analyze the evolution of a particle that moves in $ \overline\Omega$. To describe
the movements of the particle we introduce three 
families $\brc{E^1_k}_{k\in \bb N}$, $\brc{E^2_k}_{k\in \bb N}$ and $\brc{E^3_k}_{k\in \bb N}$ of independent 
random variables with exponential distribution of parameter $1/3$ (we think about them as having three independents clocks). Define
\begin{align*}
\Upsilon_k:=\min_{i\in\brc{1,2,3}}\brc{E_k^i},\qquad\forall k\in\mathbb N.
\end{align*} 
The set $\brc{\Upsilon_k}_{k\in\mathbb N}$ is a family of independent random variables distributed as an exponential of parameter $1$.
Fixing $\tau_0=0$, we define recursively the random times
\begin{align}\label{times}
\tau_k=\tau_{k-1}+\Upsilon_k, \qquad \forall k\in \mathbb N.
\end{align}
We denote by $Y_n\pare{t}$ the position of the particle at time $t$.
The evolution of the particle is described as follows. At the times $\{\tau_k\}$ the particle chooses a site $y\in {\Omega}$ according to the kernels $J$, $R$ or $G$ (that is given by the clock that rings, that is, the random variable that realizes the minimum in $\min_{i\in\brc{1,2,3}}\brc{E_k^i}$). The jumps from a site in $A_n$ to another site in $A_n$ are ruled by $J$,  the jumps between $A_n$ and $B_n$ (or vice versa) are ruled by $R$, the jumps from a site in $B_n$ to a site in $B_n$ are ruled by $G$. More precisely, if $\Upsilon_k=E_k^1$ the particle chooses a site $y\in \Omega$ according to $J\pare{Y_n(\tau_{k-1}),y}$ and it jumps on it only if $Y_n(\tau_{k-1})\in A_n$ and $y\in A_n$ otherwise the particle remains in its current position. If $\Upsilon_k=E_k^2$ the particle chooses a site $y\in \Omega$ according to the kernel $R\pare{Y_n(\tau_{k-1}),y}$ and it jumps on it only if $Y_n(\tau_{k-1})\in A_n$ (or in $B_n$) and $y\in B_n$ (or in $A_n$ respectively). Finally if $\Upsilon_k=E_k^3$ the particle chooses a site $y\in \Omega$ according to $G\pare{Y_n(\tau_{k-1}),y}$ and it jumps on it only if $Y_n(\tau_{k-1})\in B_n$ and $y\in B_n$.

The process $Y_n(t)$ is a Markov process whose generator ${L}_n$ is defined on functions  $f\in C\pare{A_n}\cap C\pare{B_n}$ as
\begin{equation}\label{gen.intro}
\begin{array}{ll}\displaystyle 
{L}_n f(x)=
& \displaystyle  \chi_{A_n}\pare{x}\int_{\Omega}\chi_{A_n}\pare{y} J\pare{x,y}\pare{f\pare{y}-f\pare{x}}dy+\chi_{B_n}\pare{x}\int_{\Omega}\chi_{B_n}\pare{y} G\pare{x,y}\pare{f\pare{y}-f\pare{x}}dy\\[10pt]
& \displaystyle  +\chi_{A_n}\pare{x}\int_\Omega \chi_{B_n}\pare{y}R\pare{x,y}\pare{f\pare{y}-f\pare{x}}dy+\chi_{B_n}\pare{x}\int_\Omega \chi_{A_n}\pare{y}R\pare{x,y}\pare{f\pare{y}-f\pare{x}}dy.
\end{array}
\end{equation}

In the next lemma we give an explicit expression of the solution of our Neumann problem \eqref{1.1} in terms of the process $\pare{Y_n(t)}_t$. Given $x\in\overline\Omega$, in what follows we denote by $P^x$ the probability measure defined on the canonical probability space of the process starting at $x$ at time $t=0$. This means that
\begin{align*}
P^x\pare{Y_n(0)=x}=1.
\end{align*} 
Moreover we will use $\mathbb {E}^x\core{\cdot}$ to the denote the expectation respect to the probability $P^x$.
\begin{lemma} \label{lema-Neu.prob}
Let $f\in W$ and $u_n$ be the solution of the Neumann problem \eqref{1.1}. Then
\begin{align*}\label{solnp}
u_n(x)=-\int_0^\infty \mathbb E^x\core{f\pare{Y_n(t)}}dt,\qquad x\in\overline\Omega.
\end{align*}
\end{lemma}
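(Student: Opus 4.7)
My plan is to read the right-hand side of \eqref{1.1} as $L_n u_n(x)$, where $L_n$ is the generator from \eqref{gen.intro}: regrouping the four terms of \eqref{1.1} (both cross-terms involve the same kernel $R$) matches \eqref{gen.intro} verbatim, so the equation says $L_n u_n = f$. The lemma then reduces to showing that the candidate
\begin{equation*}
w_n(x) := -\int_0^\infty T_t f(x)\,dt, \qquad T_t g(x) := \mathbb{E}^x[g(Y_n(t))],
\end{equation*}
is a well-defined element of $W$ that also satisfies $L_n w_n = f$; uniqueness from Theorem \ref{EUN} will then force $u_n = w_n$.

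The key structural fact I would use is that the symmetry $V(x,y) = V(y,x)$ combined with the normalization $\int_\Omega V(x,y)\,dy = 1$ assumed in this section for all three kernels makes the Lebesgue measure on $\overline\Omega$ reversible for $Y_n$. This yields three things at once: $T_t$ is self-adjoint on $L^2(\Omega)$, $T_t$ preserves $W$, and $\int_\Omega T_t g\,dx = \int_\Omega g\,dx$. The coercivity on $W$ underlying Theorem \ref{EUN} then gives a spectral gap $-\pic{L_n u, u}_{L^2} \geq c_n \norm{u}_{L^2}^2$ for $u \in W$, hence exponential decay $\norm{T_t f}_{L^2} \leq e^{-c_n t}\norm{f}_{L^2}$; continuity and boundedness of the kernels upgrade this to pointwise (and $L^\infty$) exponential decay, so the time integral defining $w_n$ converges absolutely for every $x \in \overline\Omega$. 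Since $L_n$ is bounded on $L^2$, I can exchange it with the time integral, and Dynkin's identity $T_t f(x) - f(x) = \int_0^t L_n T_s f(x)\,ds$ yields
\begin{equation*}
L_n w_n(x) = -\int_0^\infty \frac{d}{ds} T_s f(x)\,ds = f(x) - \lim_{s\to\infty} T_s f(x) = f(x).
\end{equation*}
Reversibility finally gives $\int_\Omega w_n\,dx = -\int_0^\infty \int_\Omega T_t f\,dx\,dt = -\int_0^\infty \int_\Omega f\,dx\,dt = 0$, so $w_n \in W$ and we are done.

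The hard part will be securing the exponential decay of $T_t f$: the kernels $J,G,R$ need not be bounded below and, for fixed $n$, all mixing between $A_n$ and $B_n$ is channeled through $R$, so a quantitative spectral gap is not automatic. Any $n$-dependent decay suffices for the present qualitative statement and can be extracted from the coercivity underlying Theorem \ref{EUN}. A cleaner route that bypasses pointwise decay altogether is to first establish the resolvent identity $(\lambda I - L_n)^{-1} f = \int_0^\infty e^{-\lambda t} T_t f\,dt$ for $\lambda > 0$ by bounded semigroup theory, and then let $\lambda \to 0^+$ on $W$, where $-L_n$ is invertible by Theorem \ref{EUN}. This identifies $w_n$ through a limit in $L^2(\Omega)$ and avoids any pointwise control of the semigroup.
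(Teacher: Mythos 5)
Your argument is correct but takes a genuinely different route from the paper's. You read \eqref{1.1} as $L_n u_n = f$, construct the candidate $w_n = -\int_0^\infty T_t f\,dt$ directly, and verify $L_n w_n = f$ via Dynkin's identity, justifying absolute convergence of the time integral through reversibility of Lebesgue measure together with the spectral gap on $W$ — which Lemma~\ref{lemmaeigenvalue} already provides, uniformly in $n$, so the concern you raise about $n$-dependence of the gap is moot. The paper instead detours through the Cauchy problem $\partial_t v_n = L_n v_n - f$, writes $v_n$ via the Liggett semigroup formula, identifies $u_n(x) = \lim_{t\to\infty}v_n(x,t)$, and kills the transient $\mathbb{E}^x[v_0(Y_n(t))]$ by proving — using Liggett's Prop.~1.8, a functional-analytic uniqueness argument for the invariant measure, and Varadarajan's compactness of $\mathcal M_1(\overline\Omega)$ — that the normalized Lebesgue measure is the unique invariant measure and that $\delta_x S_n(t)$ converges to it. Your route is arguably crisper and more quantitative: the same coercivity underlying Theorem~\ref{EUN} gives exponential $L^2$-decay of $T_tf$ on $W$, so the improper integral manifestly converges, a point the paper's proof leaves implicit when it passes from $\int_0^t$ to $\int_0^\infty$. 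One caveat worth noting: the resolvent variant you sketch at the end identifies $w_n$ only in $L^2$, hence yields $u_n=w_n$ a.e., whereas the lemma asserts a pointwise identity for every $x\in\overline\Omega$; your first route, upgrading $L^2$-decay to pointwise decay via Duhamel, $T_tf(x)=e^{-\alpha(x)t}f(x)+\int_0^te^{-\alpha(x)(t-s)}(KT_sf)(x)\,ds$ with $K$ bounded from $L^2$ into $L^\infty$ and $\alpha$ positive on $\overline\Omega$ by hypothesis ({\bf H}), is the one that actually delivers the statement as written.
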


\begin{proof}
Consider the evolution problem associated with the generator $L_n$ defined in 
(\ref{gen.intro}), with initial datum $v_0 \in W\cap L^\infty\pare{\overline\Omega}$ and right hand side $f(x) \in W$,
\begin{equation*}\label{evol.intro}
\left\{
\begin{array}{ll}
\displaystyle \frac{\partial v_n}{\partial t} (x,t) = {L}_n v_n(x,t) - f(x), \qquad & x\in \overline\Omega, \, t>0, \\[10pt]
v_n(x,0)=v_0(x), \qquad & x\in \overline\Omega.
\end{array}
\right.
\end{equation*}
By Theorem 2.15 in \cite{lig} we can write
\begin{equation*}\label{funv}
v_n (x,t) = S_n(t) v_0(x) - \int_0^t S_n(t-s) f(x) \, ds,
\end{equation*}
where $S_n(\cdot)$ is the semigroup associated to the generator $L_n$. 

Since $S_n(t)f(x)=\mathbb {E}^x\core{f\pare{Y_n(t)}}$,  we can interpret $v_n(x,t)$ as the expected total amount that is collected assuming that $f(x)$ is a running
payoff and that $v_0$ is the initial payoff. That is,
\begin{align}\label{expv}
v_n(x,t) = \mathbb{E}^x \core{ v_0 \pare{Y_n(t)}}-\int_0^t \mathbb E^x\core{f(Y_n(s)) }\, ds. 
\end{align}
%In the last step we used that, by Fubini's Theorem, $\displaystyle\int_0^t\mathbb E^x\pare{ f(Y_n(s))} \, ds=\bb E^x\pare{\int_0^t f(Y_n(s)) \, ds}$.
The solution $u_n$ of the Neumann problem \eqref{1.1} is given by
$$
u_n (x)= \lim_{t\to \infty } v_n (x,t)
$$
and therefore, by \eqref{expv}, we get that
\begin{align*}
u_n(x)&=\lim_{t\to\infty}\brc{\mathbb{E}^x \core{ v_0 (Y_n(t))}-\int_0^t\mathbb E^x\core{ f(Y_n(s)} \, ds }\\
&=\lim_{t\to\infty}\mathbb{E}^x \core{v_0 (Y_n(t))}-\int_0^\infty\mathbb E^x\core{f(Y_n(s))} \, ds .
\end{align*}
To conclude the proof it is enough to show that
\begin{align}\label{indconvzero}
\lim_{t\to\infty}\mathbb{E}^x \core{ v_0 (Y_n(t)) }=0,
\end{align}
independently on the initial condition $v_0\in W\cap L^\infty\pare{\overline\Omega}$.
From now on, we use  $\mathcal M_1\pare{\overline\Omega}$ to denote the space of probability measures on $\overline\Omega$ and, for every $\mu\in\mathcal M_1\pare{\overline\Omega}$, we use $\mu S(t)$ to denote the measure in $\mathcal M_1\pare{\overline\Omega}$ such that
\begin{align*}
\int_{\overline\Omega}f\;d\pare{\mu S(t)}=\int_{\overline\Omega} S(t)f\;d\mu, \qquad\forall f\in C\pare{A_n}\cap C\pare{B_n}.
\end{align*}
To prove \eqref{indconvzero} we show that
\begin{itemize}
\item the process $\pare{Y_n(t)}_t$ has a unique invariant measure given by $$\nu^*\pare{dx}=\frac{dx}{\abs{\overline\Omega}},$$
\item for every $n\in\mathbb N$ there exists $$\lim_{t\to\infty}\delta_x S_n(t),$$
where $\delta_x$ is the Dirach measure in $\overline\Omega$ centered at $x$.
\end{itemize}

Indeed, once we show the two conditions above, by Proposition 1.8 in \cite{lig} we can conclude that 
$$Y_n(t)\xrightarrow[t\to\infty]{D}Y_n(\infty),$$ 
where $Y_n(\infty)$ is distributed according to the invariant measure $\nu*$ and therefore 
$$\lim_{t\to\infty}\mathbb E^x\pare{v_0(Y_n(t))}=\mathbb E^x\pare{v_0(Y_n(\infty))}=\int_{\overline\Omega}v_0(x)\nu^*(dx)=\frac{1}{\abs{\overline\Omega}}\int_{\overline\Omega} v_0(x)dx=0, \qquad\forall v_0\in W\cap L^\infty\pare{\overline\Omega}.$$
In the previous step we used that, by dominated convergence Theorem, we can write $$\lim_{t\to\infty}\mathbb E^x\pare{v_0(Y_n(t))}=\mathbb E^x\pare{v_0(Y_n(\infty))}.$$ 
Limit \eqref{indconvzero} is then proved.

We prove now the first of the two conditions. By Proposition 1.8 in \cite{lig} we  know that the set of the invariant measures of the process $(Y_n(t))_t$ is given by

$$\mathcal I_n=\brc{\nu\in \mathcal M_1(\overline\Omega): \int_{\overline\Omega}L_nfd\nu=0, \, \forall f\in C\pare{A_n}\cap C\pare{B_n}}.$$

The measure $\nu^*(dx):=\displaystyle \frac{dx}{\abs{\overline\Omega}}\in\mathcal I_n$ because $\nu^*$ is a solution of
\begin{equation}\label{eqinvm}
\int_{\overline\Omega}L_nfd\nu=0, \qquad\forall f\in  C\pare{A_n}\cap C\pare{B_n}.
\end{equation}
To conclude that $\nu^*$ is the unique probability measure solution of \eqref{eqinvm} we need to show that, if $\nu$ is such that \eqref{eqinvm} holds, then there exists $k\in\mathbb R$ such that $\nu=k\nu^*$. Let $\nu$ be a solution of \eqref{eqinvm} and consider $F:C\pare{A_n}\cap C\pare{B_n}\to\mathbb R$ and $G:C\pare{A_n}\cap C\pare{B_n}\to\mathbb R$, the two linear operators defined as
\begin{align*}
F(g):=\frac{1}{|\overline\Omega|}\int_{\overline\Omega} g(x)dx, \quad G(g):=\int_{\overline\Omega} g(x)\nu(dx),\quad\forall  g\in C\pare{A_n}\cap C\pare{B_n}.
\end{align*}
Fix $h\in C\pare{A_n}\cap C\pare{B_n}$ such that $\displaystyle\int_{\overline\Omega}h(x)dx=0$. There exists a unique $f_{n, h}\in C\pare{A_n}\cap C\pare{B_n}$ solution to 
\begin{align*}
L_nf_{n, h}(x)=h(x)
\end{align*}
with $\displaystyle\int_{\overline\Omega}h(x)d\nu=0$.  This implies that $F(h)=0$ and also $G(h)=0$, hence
$\ker(F)\subseteq \ker (G)$ and consequently there exists $k\in \mathbb R$ such that $G(g)=kF(g)$, for all $g\in C\pare{A_n}\cap C\pare{B_n}$. This allows to conclude that $$\nu(dx)=\frac{k}{\abs{\overline\Omega}}dx.$$

We prove now the second condition. Since $\overline\Omega$ is compact, the set $\mathcal M_1\pare{\overline\Omega}$ endowed with the weak topology is also compact (see Theorem 3.4 in \cite{Var}).
Therefore the family $\pare{\delta_x S_n(t)}_{t\in [0, \infty)}$ is relatively compact in $\mathcal M_1(\overline \Omega)$ and, by the first condition and Proposition 1.8 in \cite{lig}, we can conclude that any sequence $\pare{\delta_x S_n(t_k)}_{k\in\mathbb N}$ is convergent and
\begin{align*}
\lim_{t_k\to\infty}\delta_x S_n(t_k)=\nu^*.
\end{align*}
This is enough to conclude that
\begin{align*}
\lim_{t\to\infty}\delta_x S_n(t)=\nu^*
\end{align*}
and therefore our second condition holds.
\end{proof}

From Lemma \ref{lema-Neu.prob} we have that the solution $u_n$ to the Neumann problem \eqref{1.1}
can be seen as
$$
u_n(x)=-\int_0^\infty \mathbb E^x\core{f\pare{Y_n(t)}}dt,\qquad x\in\overline\Omega.
$$

Now, our result, Theorem \ref{theo1.intro}, says that the limit as $n \to \infty$ of $u_n$ is given by the sum of the components of the pair
$(u_A, u_B) \in L^2(\Omega) \times L^2(\Omega)$ that is a solution to the limit system \eqref{limitAn}--\eqref{limitBn}.
Therefore, we have obtained the limit as $n \to \infty$ of the expected value of $f$ along the trajectories of the process $Y_n$
starting at $x$,
$$
-\int_0^\infty \mathbb E^x\core{f\pare{Y_n(t)}}dt,
$$
as
$$
\lim_{n \to \infty} u_n(x)= \lim_{n \to \infty} -\int_0^\infty\mathbb E^x\core{ f\pare{Y_n(t)}}dt
= u_A (x) + u_B (x) ,\qquad x\in\overline\Omega.
$$

\subsubsection{The Dirichlet problem}

Concerning the Dirichlet problem, the movement of the particle obeys the same rules as before,
but now the particle is allowed to jump outside $\overline\Omega$, and, as soon as this happens, the particle is
killed and disappears from the system.
In this new model we denote by $Z_n(t)$ the position of the particle in $\overline\Omega$ and we suppose 
(as we did before, but this time in the whole $\mathbb{R}^N$) that we have probability kernels in our equations, that is,
\begin{align*}
\int_{\mathbb {R}^N}J(x,y)dy= 1, \quad \int_{\mathbb {R}^N}R(x,y)dy= 1,\quad \int_{\mathbb {R}^N}G(x,y)dy= 1, \qquad\forall x\in\overline\Omega.
\end{align*}
The process $\pare{Z_n(t)}_{t\geq 0}$ is a Markov process whose generator ${L}_n$ is defined on functions  $f\in C\pare{A_n}\cap C\pare{B_n}$ such that $\text{supp} f \subseteq \overline\Omega$ as
\begin{align}\label{gen.intro2}
\begin{aligned}
\hspace{-25pt}{L}_n f(x)&=\chi_{\brc{A_n\cap\overline\Omega}}\pare{x}\int_{\mathbb {R}^N}\chi_{A_n}\pare{y} J\pare{x,y}\pare{f\pare{y}-f\pare{x}}dy+\chi_{\brc{B_n\cap\overline\Omega}}\pare{x}\int_{\mathbb {R}^N}\chi_{B_n}\pare{y} G\pare{x,y}\pare{f\pare{y}-f\pare{x}}dy\\
&\quad +\chi_{\brc{A_n\cap\overline\Omega}}\pare{x}\int_{\mathbb {R}^N} \chi_{B_n}\pare{y}R\pare{x,y}\pare{f\pare{y}-f\pare{x}}dy+\chi_{\brc{B_n\cap\overline\Omega}}\pare{x}\int_{\mathbb {R}^N} \chi_{A_n}\pare{y}R\pare{x,y}\pare{f\pare{y}-f\pare{x}}dy.
\end{aligned}
\end{align}

The probability that the particle situated in a site $x\in\overline\Omega$ is killed in one jump is given by 
\begin{align}\label{probout}
\begin{aligned}
q_n(x):=
&\chi_{\brc{A_n\cap\overline\Omega}}\pare{x}\pare{\int_{\overline\Omega^c}\chi_{A_n}\pare{y} J\pare{x,y}dy+\int_{\overline\Omega^c}\chi_{B_n}\pare{y} R\pare{x,y}dy}\\
&+\chi_{\brc{B_n\cap\overline\Omega}}\pare{x}\pare{\int_{\overline\Omega^c}\chi_{A_n}\pare{y} R\pare{x,y}dy+\int_{\overline\Omega^c}\chi_{B_n}\pare{y} G\pare{x,y}dy}.
\end{aligned}
\end{align}
In what follows, to simplify the exposition, we suppose that
\begin{align}\label{condpos}
q_n^{\text{inf}}:=\inf_{x\in\overline\Omega}q_n(x)>0,
\end{align}
and we comment in Remark \ref{rem.K} how to deal with the general case in which the particle
has a uniformly positive probability of being killed in less than a finite number of jumps.

In the next lemma we give the probabilistic interpretation of the solution to the Dirichlet problem \eqref{1.1.Dir} in terms of the process $\pare{Z_n(t)}_t$.
\begin{lemma}
Let $f\in L^\infty(\overline\Omega)$ and $u_n$ be the solution of \eqref{1.1.Dir}. Let $s_n:=\inf\brc{t\geq 0: Z_n(t)\not\in \overline\Omega}$ be the first time at which the process jumps outside $\overline\Omega$. Then, 
\begin{align}\label{stoppingtime}
P^x\pare{s_n <\infty}=1,\qquad x\in \overline\Omega,
\end{align}
and
\begin{align}\label{soldir}
u_n(x)=-\mathbb E^x\core{\int_0^{s_n} f\pare{Z_n(t)}dt}, \qquad x\in\overline\Omega.
\end{align}
\end{lemma}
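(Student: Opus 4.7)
The plan is to mirror the approach of Lemma \ref{lema-Neu.prob}, adapting the semigroup/Feynman--Kac representation to the killed process. I will split the argument into (i) establishing $P^x(s_n<\infty)=1$, (ii) writing $u_n$ as the long-time limit of the evolution problem associated to $L_n$, and (iii) identifying that limit with the stopped integral in \eqref{soldir}.

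For (i), I would exploit \eqref{condpos}. Because the holding times $\Upsilon_k$ between consecutive jumps are i.i.d. exponentials of parameter $1$, the number of jumps $N_n(t)$ performed by $Z_n$ up to time $t$ is finite for every $t$, and the jump chain embedded in $Z_n$ has, at each step, a probability at least $q_n^{\text{inf}}>0$ of exiting $\overline\Omega$ and therefore being killed, independently of the past. Hence the number of jumps until killing is stochastically dominated by a geometric variable with parameter $q_n^{\text{inf}}$, which is finite a.s., and since $\tau_k\to\infty$ a.s., we deduce $P^x(s_n<\infty)=1$.

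For (ii), consider the evolution problem
\begin{equation*}
\begin{cases}
\dfrac{\partial v_n}{\partial t}(x,t)=L_n v_n(x,t)-f(x),& x\in\overline\Omega,\; t>0,\\[4pt]
v_n(x,0)=v_0(x),& x\in\overline\Omega,
\end{cases}
\end{equation*}
with $v_0\in L^\infty(\overline\Omega)$, where now $L_n$ is the killed generator \eqref{gen.intro2}. By the same semigroup argument as in the proof of Lemma \ref{lema-Neu.prob}, $v_n(x,t)=S_n(t)v_0(x)-\int_0^t S_n(t-s)f(x)\,ds$, but now the killed-process interpretation gives $S_n(t)g(x)=\mathbb E^x[g(Z_n(t))\mathbf{1}_{\{s_n>t\}}]$ for any bounded $g$ extended by $0$ outside $\overline\Omega$. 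As $t\to\infty$, the first term tends to $0$ by dominated convergence and step (i) (since $\mathbf{1}_{\{s_n>t\}}\to 0$ a.s.), while a change of variables and Fubini give
\begin{equation*}
\int_0^t S_n(t-s)f(x)\,ds=\int_0^t \mathbb E^x\!\core{f(Z_n(r))\mathbf{1}_{\{s_n>r\}}}dr\;\xrightarrow[t\to\infty]{}\;\mathbb E^x\!\core{\int_0^{s_n} f(Z_n(r))\,dr}.
\end{equation*}

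For (iii), I still have to check that $u_n(x)=\lim_{t\to\infty}v_n(x,t)$ is indeed the solution of \eqref{1.1.Dir}: passing to the limit in the evolution equation (and using that the stationary profile $\partial_t v_n\to 0$ follows from the boundedness furnished by the killing) yields $L_n u_n(x)=f(x)$ for $x\in\overline\Omega$, and the representation forces $u_n\equiv 0$ on $\R^N\setminus\overline\Omega$, so by the uniqueness in Theorem \ref{EUSD} this limit coincides with the unique solution of \eqref{1.1.Dir}. Combining with the computation above yields \eqref{soldir}. The main obstacle I anticipate is the rigorous justification of the killed-semigroup identity $S_n(t)g(x)=\mathbb E^x[g(Z_n(t))\mathbf{1}_{\{s_n>t\}}]$ and the interchange of limit and integral; both rely on the uniform killing bound $q_n^{\text{inf}}>0$, which in particular provides an exponential tail $P^x(s_n>t)\leq e^{-c_n t}$ for some $c_n>0$, giving the integrability needed for dominated convergence.
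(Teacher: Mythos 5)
Your proposal is correct in outline but takes a genuinely different route from the paper, and the comparison is worth spelling out. The paper does \emph{not} mirror the semigroup argument of Lemma~\ref{lema-Neu.prob}: it instead invokes the Dynkin martingale $M_n(t)=u_n(Z_n(t))-u_n(Z_n(0))-\int_0^t L_n u_n(Z_n(s))\,ds$ from Lemma~A.5.1 of~\cite{KL}, proves uniform integrability of the stopped martingale $(M_n(t\wedge s_n))_t$, and applies Doob's optional stopping theorem; since $L_n u_n=f$ on $\overline\Omega$ and $u_n(Z_n(s_n))=0$, the identity $\mathbb E^x[M_n(s_n)]=0$ gives \eqref{soldir} directly, with no need to pass through the parabolic problem. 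For the first assertion, the paper proves the stronger bound $\mathbb E^x[s_n]\leq 1/q_n^{\text{inf}}$ analytically, by plugging $g=\chi_{\overline\Omega}$ into the Fokker--Planck equation for the law $\nu_t^n$ and deriving $\tfrac{d}{dt}\int_{\overline\Omega}\nu_t^n(dy)\leq -q_n^{\text{inf}}\int_{\overline\Omega}\nu_t^n(dy)$; this finite-mean bound is exactly what feeds the uniform-integrability step. Your approach — killed semigroup plus Duhamel formula, and geometric domination of the embedded jump chain for $P^x(s_n<\infty)=1$ — is also viable, but two points deserve care. First, $q_n^{\text{inf}}$ is a continuous-time \emph{rate}, not the per-jump killing probability of the embedded chain; the per-jump probability is $q_n(x)/\rho_n(x)$ with $\rho_n(x)$ the total jump rate, which is only bounded \emph{above} by a universal constant, so the geometric parameter should be $q_n^{\text{inf}}/\sup_x\rho_n(x)$ (still positive, so the conclusion stands, but state it correctly); alternatively, argue as the paper does that the killing rate is $\geq q_n^{\text{inf}}$ uniformly in time, giving $P^x(s_n>t)\leq e^{-q_n^{\text{inf}}t}$ directly. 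Second, your justification of step (iii) — that ``$\partial_t v_n\to 0$ follows from boundedness'' — is not a valid implication; boundedness of $v_n(\cdot,t)$ does not force the time-derivative to vanish. The cleaner route is to set $w_n:=\int_0^\infty S_n(r)f\,dr$ (convergent in $L^\infty$ by your exponential tail), note $w_n\in W_{Dir}$, and compute $L_n w_n=\int_0^\infty \tfrac{d}{dr}S_n(r)f\,dr=\lim_{T\to\infty}S_n(T)f-f=-f$, then invoke uniqueness from Theorem~\ref{EUSD}. With those two repairs, your semigroup proof is complete and arguably more parallel to the Neumann case, whereas the paper's martingale proof is shorter because it never needs to solve or pass to the limit in a parabolic problem.
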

\begin{proof}
%We start by proving \eqref{stoppingtime}. 
%Let $\pare{\tau_k}_{k\in\mathbb N}$ be the sequence of times defined  in \eqref{times}.  Observe that
%\begin{align*}
%\brc{S_n<\infty}=\brc{\exists k\in\mathbb N: Z^n\pare{\tau_k}\not\in\Omega}.
%\end{align*}
%Therefore to conclude \eqref{stoppingtime} it is sufficient to show that
%\begin{align}\label{stoppingtime2}
%P^x\pare{Z^n\pare{\tau_k}\in\Omega,\;\forall k\in\mathbb N}=0,\qquad\forall x\in \Omega.
%\end{align}
%Let $x\in\Omega$. Observe that
%\begin{align*}
%\begin{aligned}
%P^x\pare{Z^n\pare{\tau_k}\in\Omega,\;\forall k\in\mathbb N}&=P^x\pare{\cap_{k=1}^\infty\brc{Z^n\pare{\tau_k}\in\Omega}}\\
%&\leq P^x\pare{\cap_{k=1}^N\brc{Z^n\pare{\tau_k}\in\Omega}},
%\end{aligned}
%\end{align*}
%for all $N\in\mathbb N$. Therefore to conclude \eqref{stoppingtime2} it is enough to show that 
%\begin{align}\label{stoppingtime3}
%P^x\pare{\cap_{k=1}^N\brc{Z^n\pare{\tau_k}\in\Omega}}\leq q^N,
%\end{align}
%for some $q\in (0,1)$. Let analyse the case in which $N=2$. Observe that
%\begin{align*}
%P^x\pare{\cap_{k=1}^2\brc{Z_n\pare{\tau_k}\in\Omega}}&=P^x\pare{Z_n\pare{\tau_2}\in\Omega\big|Z_n\pare{\tau_1}\in\Omega}P^x\pare{Z_n\pare{\tau_1}\in\Omega}\\
%&\leq \pare{1-q_n^{\text{inf}}}^2,
%\end{align*}
%where $q_n^{\text{inf}}$ is defined in \eqref{condpos}. In a similar way we can prove that
%\begin{align*}
%P^x\pare{\cap_{k=1}^N\brc{Z^n\pare{\tau_k}\in\Omega}}\leq \pare{1-q_n^{\text{inf}}}^N,\quad\forall N\in\mathbb N.
%\end{align*}
%By condition \eqref{condpos} we conclude \eqref{stoppingtime3}. 
We prove \eqref{stoppingtime} by showing the stronger property
\begin{align}\label{finexp}
\mathbb E^x\core{s_n}<\infty.
\end{align}
Observe that
\begin{align}\label{exp1}
\displaystyle\mathbb{E}^x\core{s_n}&=\int_0^\infty P^x\pare{s_n>t}dt=\int_0^\infty\int_{\overline\Omega}\nu_t^n\pare{dy}dt,
\end{align}
where $\nu_t^n\pare{dy}$ is the probability measure on $\mathbb R^N$ %$\in\mathcal M_1\pare{\mathbb R}$
such that
\begin{align}
P^x\pare{Z_n(t)\in A}=\int_A\nu_t^n(dy),
\end{align}
for every measurable $A\subseteq \mathbb R^N$. By Lemma A.5.1 in \cite{KL}, the probability measure $\nu_t^n\pare{dy}$ satisfies
\begin{align}\label{dendir}
\begin{cases}
\displaystyle\frac{d}{dt}\int_{\overline\Omega}g(y)\nu_t^n\pare{dy}=\int_{\mathbb R^N}L_n g(y)\nu_t^n\pare{dy}, \qquad & t>0,\\[10pt]
\displaystyle\int_{\overline\Omega^c}g(y)\nu_t^n\pare{dy}=0,\qquad & t>0,\\[10pt]
\displaystyle\int_{\mathbb R^N} g(y)\nu_0^n\pare{dy}=g(x),
\end{cases}
\end{align}
for every $g\in C(A_n)\cap C(B_n)$ such that $\text{supp} g\subseteq\overline\Omega$.
Recall the expression of $L_n$ given in \eqref{gen.intro2} and take $g(x)=\mathbb \chi_{\overline\Omega}(x)$ in \eqref{dendir}. We obtain that
\begin{align}\label{dernu}
\begin{aligned}
\displaystyle\frac{d}{dt}\int_{\overline\Omega}\nu_t^n\pare{dy}=\int_{\mathbb R^N}
&\Big\{-\chi_{\brc{A_n\cap\overline\Omega}}\pare{y}\pare{\int_{\overline\Omega^c}\chi_{A_n}\pare{z} J\pare{y,z}dz
+\int_{\overline\Omega^c}\chi_{B_n}\pare{z} R\pare{y,z}dz}\\
\displaystyle& \qquad -\chi_{\brc{B_n\cap\overline\Omega}}\pare{y}\pare{\int_{\overline\Omega^c}\chi_{A_n}\pare{z} R\pare{y,z}dz+\int_{\overline\Omega^c}\chi_{B_n}\pare{z} G\pare{y,z}dz}\Big\}\nu_t^n(dy).
\end{aligned}
\end{align}
In the previous step we used that since $\text{supp } g\subseteq \overline\Omega$ and $J, R$ and $G$ are symmetric
\begin{align}
&\int_{\mathbb R^N}\chi_{\brc{D\cap\overline\Omega}}\pare{y}\pare{\int_{\mathbb {R}^N}\chi_{E}\pare{z} V\pare{y,z}\pare{g\pare{z}-g\pare{y}}dz}d\nu_t^n(dy)\\
&\hspace{+20pt}=\int_{\overline\Omega}\int_{\overline\Omega}\chi_{D}\pare{y}\chi_{E}(z)V\pare{y,z}\pare{g\pare{z}-g\pare{y}}dz\,d\nu_t^n(dy)-\int_{\overline\Omega}\chi_{D}\pare{y}g(y)\pare{\int_{\overline\Omega^c}\chi_{E}\pare{z}V(y, z)dz}d\nu_t^n(dy)\\
&\hspace{+20pt}=-\int_{\mathbb R^N}\chi_{\brc{D\cap \overline\Omega}}\pare{y}g(y)\pare{\int_{\overline\Omega^c}\chi_{E}\pare{z}V(y, z)dz}d\nu_t^n(dy),
\end{align}
for every $V\in\brc{J, R, G}$ and $D, E\in \brc{A_n, B_n}$.

Recall the definition of $q_n(\cdot)$ and $q_n^{\text{inf}}$ given in \eqref{probout} and \eqref{condpos} respectively, by \eqref{dernu} we can write that
\begin{align*}
\frac{d}{dt}\int_{\overline\Omega}\nu_t^n\pare{dy}=\int_{\overline\Omega}-q_n(y)\nu_t^n(dy)\leq -q_n^{\text{inf}}\int_{\overline\Omega}\nu_t^n(dy).
\end{align*}
Therefore 
\begin{align}\label{ine1}
\int_{\overline\Omega}\nu_t^n(dy)\leq e^{-q_n^{\text{inf}}t}\int_{\overline\Omega}\nu_0^n(dy)=e^{-q_n^{\text{inf}}t}.
\end{align}
By \eqref{exp1} and \eqref{ine1} we get that
\begin{align}\label{bound_exp_st}
\mathbb{E}^x\pare{s_n}\leq\int_0^\infty e^{-q_n^{\text{inf}}t}dt=\frac{1}{q_n^{\text{inf}}}.
\end{align}
This concludes the proof of \eqref{finexp}.

We prove now \eqref{soldir}. If $f\in L^\infty(\overline\Omega)$ also $u_n\in L^\infty(\overline\Omega)$. Therefore, by Lemma A.5.1 of \cite{KL} we know that
\begin{align}\label{dyn}
M_n\pare{t}:=u_n\pare{Z_n\pare{t}}-u_n\pare{Z_n\pare{0}}-\int_0^tL_nu_n\pare{Z_n\pare{s}}ds
\end{align}
is a martingale with respect to the natural filtration generated by the process. 
Once we prove that $\pare{M_n\pare{t\wedge s_n}}_{t\geq 0}$  is uniformly integrable, that is
\begin{align}\label{unifint}
\lim_{K\to \infty}\sup_{t\in [0, \infty)}\int_{\brc{\abs{M_n(t\wedge s_n)}>K}}\abs{M_n(t\wedge s_n)}dP^x=0,
\end{align}
by the Doob's Optional Stopping Theorem (see \cite{DW}) we obtain that $\mathbb E^x\core{M_n\pare{s_n}}=\mathbb E^x\core{M_n\pare{0}}=0$ and consequently
\begin{align}\label{rel2}
\begin{aligned}
\mathbb E^x\core{\int_0^{s_n}L_nu_n\pare{Z_n\pare{s}}ds }&=\mathbb E^x\core{Z_n\pare{s_n}}-u_n(x)\\
&=-u_n(x).
\end{aligned}
\end{align}
Since $u_n$ is the solution of \eqref{1.1.Dir}, by \eqref{rel2}, we get \eqref{soldir}.
Therefore to conclude it remains just to show \eqref{unifint}.
By \eqref{dyn} we get that
\begin{align}\label{bound_stopped_mart}
\abs{M_n(t\wedge s_n)}\leq 2\norm{u_n}_\infty+8s_n\norm{u_n}_\infty,\qquad\forall t\geq 0.
\end{align}
Therefore, for every $t\geq 0$ and $K\geq 0$, it holds the following inclusion of events
\begin{align}\label{incl_ev}
\brc{\abs{M_n(t\wedge s_n)}>K}\subseteq \brc{s_n>\frac{K-2\norm{u_n}_\infty}{8\norm{u_n}_\infty}}.
\end{align}
To simplify the notation we set $$\mathcal A_{n, K}:=\brc{s_n>\frac{K-2\norm{u_n}_\infty}{8\norm{u_n}_\infty}}.$$
From \eqref{bound_stopped_mart} and \eqref{incl_ev} we get that
\begin{align}\label{fin_ine}
\begin{aligned}
\sup_{t\in [0, \infty)}\int_{\brc{\abs{M_n(t\wedge s_n)}>K}}\abs{M_n(t\wedge s_n)}dP^x&\leq \sup_{t\in [0, \infty)}\int_{\brc{\abs{M_n(t\wedge s_n)}>K}}\pare{2\norm{u_n}_\infty+8s_n\norm{u_n}_\infty}dP^x\\
&\leq 2\norm{u_n}_\infty P^x\pare{\mathcal A_{n, K}}+8\norm{u_n}_\infty\int_{\mathcal A_{n, K}}s_ndP^x.
\end{aligned}
\end{align}
By \eqref{finexp} we can conclude that the limit of the right hand side of \eqref{fin_ine} converges to $0$ as $K\to\infty$. Therefore \eqref{unifint} is proved.
\end{proof}

As for the Neumann case, we have that our homogenization result for $u_n$ given in Theorem \ref{theo1.intro.dir}
(the limit as $n\to \infty$ of $u_n$ is given by the sum of the components of the pair $(u_A, u_B) \in L^2(\Omega) \times L^2(\Omega)$ 
that solves \eqref{limitAn_dir}--\eqref{limitBn_dir}) 
implies the limit of the expected value of $f$ along the trajectories of the process $Z_n$ until the particle jumps off $\Omega$ (and
gets killed),
$$
u_n(x)=-\mathbb E^x\core{\int_0^{s_n} f\pare{Z_n(t)}dt},
$$
is given by 
$$
 \lim_{n\to \infty}  u_n(x)= \lim_{n\to \infty}  -\mathbb E^x\core{\int_0^{s_n} f\pare{Z_n(t)}dt} =
 u_A(x) + u_B(x), \qquad x\in\Omega.
$$

\begin{remark} \label{rem.K}{\rm 
First, let us see that the particle
has a uniformly positive probability of being killed in less than a finite number of jumps. 
To this end recall that we assumed hypothesis ({\bf H}) and then we have that there exists $c>0$ and $\delta>0$ such that
$$
V(x,y)\geq c >0, \qquad \forall y \in B_\delta (x)
$$
for any of the three kernels, $V=J,G$ or $R$, and for every $x\in \overline{\Omega}$.

Now, we observe that for $x$ in a narrow strip around $\partial \Omega$, 
$$
x\in \Gamma_1 = \{ x\in \overline\Omega \, : \, \mbox{dist} (x, \partial\overline \Omega) <\delta/2 \},
$$
we have that the probability of jumping outside $\overline\Omega$ in one jump is strictly positive (uniformly in the strip). 
With a similar argument, we get that the probability of reaching $\Gamma_1$ in one jump starting from a point in
$\Gamma_2 = \{ x\in \overline\Omega \, : \, \mbox{dist} (x, \Gamma_1) <\delta/2 \}$ is also strictly positive (uniformly)
and therefore the probability of exiting the domain in two jumps starting from $x\in \Gamma_2$ is uniformly positive
(we have a positive probability of reaching $\Gamma_1$ in the first jump and a positive probability of exiting from there
in the next jump). Repeating this argument we find a finite number of strips 
$\Gamma_j =\{ x\in \overline\Omega \, : \, \mbox{dist} (x, \Gamma_{j-1}) <\delta/2 \}$, 
$j=1,...,K$, such that $\overline\Omega = \cup_{j=1}^K \Gamma_j$ (recall that we assumed that $\Omega$ is bounded), 
with a uniform positive probability of reaching $\Gamma_{j-1}$ starting at a point in $\Gamma_j$ in one jump.
Hence, we have that the probability of reaching $\mathbb{R}^N \setminus \overline\Omega$ in less than $K+1$ jumps
starting at any point in $\overline{\Omega}$
is uniformly bounded below.

The property that the particle jumps outside $\overline\Omega$ with a uniform positive probability in a finite number of jumps
can be also obtained when, for example, $J\equiv 0$ (there are no jumps from $A_n$ to $A_n$), 
but assuming that $R$ and $G$ have a large support. 
For example, we have that 
$$
\mathbb{P} \Big(\mbox{jumping from $\overline\Omega$ to $\mathbb{R}^N\setminus \overline\Omega$ in one jump}\Big)\geq 
\min \left\{ \int_{B_n \cap\overline \Omega} \int_{
A_n \cap\overline \Omega^c} R(x,y) \, dydx;  \int_{A_n \cap \overline\Omega} \int_{
B_n \cap \overline\Omega^c} R(x,y) \, dydx \right\} >0,
$$
is strictly positive in $\overline{\Omega}$ when the support of $R$ is large. Similar bounds can be obtained for 
the exiting probability in two jumps, passing first from $A_n\cap \overline\Omega$ to $B_n\cap\overline \Omega$ using $R$ 
and then from $B_n \cap \overline\Omega$ to $\mathbb{R}^N\setminus\overline \Omega$, using $R$ or $G$, etc.

Now, when we have that the probability of exiting the domain in $K+1$ jumps 
is strictly bounded below by a positive constant we can use our previous reasoning to obtain 
that the stopping times are finite almost surely. 
Let us consider the process $W_n(t)$ bound from $Z_n(t)$ in the following way: 
for $W_n(t)$ we stay at the location until the previous process $Z_n(t)$ jumps $K+1$ times and at this time
we move the position of the particle to the position of $Z_n (t_{K+1})$ or we kill the particle if according
to the process $Z_n(t)$ we humped outside of $\overline\Omega$ in the first $K+1$ jumps
(notice that $W_n$ makes only one jump while $Z_n$ makes $K+1$ jumps in the same time interval). 
Call $\widetilde{s}_n$ the stopping time
associated with $W_n(t)$, that is, the first time at which $W_n(t)$ jumps outside $\overline\Omega$.
We trivially have 
$$
\widetilde{s}_n \geq s_n
$$
(since when the process $Z_n(t)$ jumps outside $\overline\Omega$ the process $W_n(t)$ also jumps outside 
but at a later time (it has to wait $K+1$ rings of the clock to move)). 

This process $W_n(t)$ is also a jump process in $\mathbb{R}^N$ (with a generator $\widetilde{L}_n$
that can be obtained iterating $L_n$ $K+1$ times) that has a uniform positive probability
of exiting the domain $\overline\Omega$ in only one jump (notice that we have proved that $Z_n(t)$ has a positive
uniform probability of exiting in less than $K+1$ jumps). Therefore, our previous arguments can be 
applied to this setting (we have a jump process with a uniformly positive probability of exiting in one jump)
and we obtain that
$$
\mathbb E^x\core{{s}_n} \leq \mathbb E^x\core{\widetilde{s}_n}<\infty,
$$
proving that 
$$
P^x\pare{s_n <\infty}=1,\qquad x\in \overline\Omega.
$$
From this point, the proof of the desired statement,
$$
u_n(x)=-\mathbb E^x\core{\int_0^{s_n} f\pare{Z_n(t)}dt}, \qquad x\in\overline\Omega,
$$
runs exactly as before.
}
\end{remark}

	\section{The Neumann problem}
	\label{Sect.proofs}
	\setcounter{equation}{0}
	
	In this section we present the proofs of the results concerning to the Neumann problem \eqref{1.1}.
	First, we need to establish an auxiliary result that helps us to prove existence and uniqueness of solutions
	to  \eqref{1.1} as well as to obtain a uniform bound (uniform in terms of being independent of $n$) for the solutions.

	Now, let us consider the generalized eigenvalue for the Neumann problem \eqref{1.1}. Let us introduce 
	$$
	\Phi(A,B,V)w := \int_{A}\int_{B}V(x,y)(w(y)-w(x))^2 dy dx
	$$
	which is set for any open bounded sets $A$ and $B \subset \R^N$, $V \in \mathcal{C}(\R^N,\R)$ and $w \in L^2(\R^N)$.
	
	We consider the following quantity 
	\begin{equation}\label{lambdaN}
	\lambda_n(\Omega)=\frac{1}{2}\inf_{u \in W, \; u \neq 0} 
	\frac{\Phi(A_n,A_n,J)u + 2\Phi(A_n,B_n,R)u + \Phi(B_n,B_n,G)u}{||u||^2_{L^2(\Omega)}}
	\end{equation}
	where 
	$W = \left\{ u \in L^2(\Omega) \, : \, \int_\Omega u(x) dx = 0 \right\}$
	is the space of the measurable functions in $L^2(\Omega)$ with null average set in \eqref{SetW_solutions}. We have the following:
	
	\begin{lemma}\label{lemmaeigenvalue}
		Let $\{\lambda_n(\Omega) \}$ be the family of values given by \eqref{lambdaN}. 
		Then, there exists a positive constant $c$ such that
		$$
		\lambda_n(\Omega) > c,\quad\forall n\geq 1.
		$$
	\end{lemma}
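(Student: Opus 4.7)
The plan is to strip the partition out of the quotient and reduce to a standard nonlocal Poincaré inequality that does not depend on $n$. Since $A_n \cap B_n = \emptyset$ and $A_n \cup B_n = \Omega$, the numerator in \eqref{lambdaN} can be rewritten as a single double integral
\begin{equation*}
\int_\Omega\int_\Omega K_n(x,y) (u(y)-u(x))^2 \, dy\, dx,
\end{equation*}
where $K_n(x,y)$ equals $J(x,y)$, $G(x,y)$ or $R(x,y)$ according to which pair of characteristic functions $\chi_{A_n}(x)\chi_{A_n}(y)$, $\chi_{B_n}(x)\chi_{B_n}(y)$, $\chi_{A_n}(x)\chi_{B_n}(y) + \chi_{B_n}(x)\chi_{A_n}(y)$ is active at $(x,y)$. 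In particular, pointwise on $\overline{\Omega}\times\overline{\Omega}$ one has the $n$-independent lower bound
\begin{equation*}
K_n(x,y) \ge \kappa(x,y) := \min\brc{J(x,y),\, G(x,y),\, R(x,y)}.
\end{equation*}

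Next I would use hypothesis $\mathbf{(H)}$ to extract a uniform positive constant from $\kappa$ near the diagonal. Each of $J$, $G$, $R$ is continuous on $\R^N\times \R^N$ and strictly positive on the diagonal, so $\kappa$ is continuous on the compact set $\overline{\Omega}\times\overline{\Omega}$ and satisfies $\kappa(x,x)>0$ for every $x\in \overline{\Omega}$. A standard compactness argument then yields constants $c_0>0$ and $\delta>0$, depending only on $J$, $G$, $R$ and $\Omega$, such that $\kappa(x,y)\geq c_0$ whenever $x,y\in\overline{\Omega}$ and $|x-y|<\delta$. Consequently, for every $u\in W$,
\begin{equation*}
\Phi(A_n,A_n,J)u + 2\Phi(A_n,B_n,R)u + \Phi(B_n,B_n,G)u \;\ge\; c_0\int_\Omega\int_{\Omega\cap B_\delta(x)}(u(y)-u(x))^2\, dy\, dx.
\end{equation*}

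To finish, I would invoke the nonlocal Poincaré inequality for zero–mean functions: since $\Omega$ is a bounded (connected) domain, there exists a constant $C=C(\Omega,\delta)>0$ such that, for every $u\in L^2(\Omega)$ with $\int_\Omega u=0$,
\begin{equation*}
\int_\Omega\int_{\Omega\cap B_\delta(x)}(u(y)-u(x))^2\, dy\, dx \;\ge\; C\, \norm{u}_{L^2(\Omega)}^2.
\end{equation*}
This is a classical estimate (see, e.g., \cite{ElLibro}); no partition appears in its statement, so $C$ is independent of $n$. Combining the two displays yields $\lambda_n(\Omega)\ge c_0C/2$, which is the desired uniform bound. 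The only genuinely non-routine point is the Poincaré inequality itself, and since it is standard in the nonlocal literature I would cite it rather than reprove it; beyond that, the argument is just the compactness observation that the worst the partition can do is force us to use the smallest of the three kernels.
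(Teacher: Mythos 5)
Your proof is correct and follows essentially the same route as the paper's: bound the three kernels below by their pointwise minimum, observe that the three partitioned integrals reassemble into a single $\Omega\times\Omega$ integral against this combined kernel (using symmetry of $R$ for the cross term), extract a uniform $c_0>0$ on a $\delta$-neighborhood of the diagonal via $\mathbf{(H)}$, and invoke the standard nonlocal Poincaré inequality from \cite{ElLibro}. The only difference is cosmetic: you spell out the compactness argument producing $c_0$ and $\delta$, whereas the paper states it directly.
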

	\begin{proof}
		Let
		$$
		\min\{ J(z) , G(z), R(z)\} := K(z), 
		$$
		and assume that 
		\begin{equation*} \label{cotaK}
		K(z) \geq c_0 >0 \qquad \mbox{for } |z| \leq \delta
		\end{equation*}
		for some positive constant $c_0$. Notice that it is possible due to assumption ({\bf H}).

		Let us show that there exists a constant $c>0$ (independent of $n$) such that
		$$\lambda_n(\Omega)\geq c>0.$$
		Indeed, using that $\int_\Omega u_n(x)\, dx=0$, it follows from \cite[Proposition 3.4]{ElLibro} that 
		\begin{equation*}
		\begin{array}{l}
		\displaystyle \frac12 \int_{A_n} \int_{A_n} J (x,y) (u_n (y) - u_n (x))^2 dy dx  \displaystyle 
		+ \int_{A_n} \int_{B_n} R (x,y) (u_n (y) - u_n (x))^2 dy dx \\[10pt]
		\displaystyle + \frac12 \int_{B_n} \int_{B_n} G (x,y) (u_n (y) - u_n (x))^2 dy dx\\[10pt]
		\displaystyle \qquad \geq  \displaystyle \frac12 \int_{A_n} \int_{A_n} K (x,y) (u_n (y) - u_n (x))^2 dy dx  \displaystyle 
		+ \int_{A_n} \int_{B_n} K (x,y) (u_n (y) - u_n (x))^2 dy dx \\[10pt]
		\displaystyle \qquad \qquad  + \frac12 \int_{B_n} \int_{B_n} K (x,y) (u_n (y) - u_n (x))^2 dy dx\\[10pt]
		\qquad \displaystyle \geq c_0 \int_\Omega \int_\Omega \chi_{|x-y|<\delta} (u_n (y) - u_n (x))^2 dy dx\\[10pt]
		\qquad  \displaystyle \geq c \| u_n \|_{L^2(\Omega)}^2
		\end{array}
		\end{equation*}
		for some constant $c>0$, independent of $n$, 
		where $\chi_{|x-y|<\delta}$ is the characteristic function of the set $\mathcal{O}(x) = \{ y \in \R^N \; : \; |x-y|<\delta \}$.
	\end{proof}
	
	\begin{remark} {\rm
		Notice that we need $R\neq 0$ in order to get $\lambda_n(\Omega) > 0$. In fact, if $R=0$, then 
		$$
		u_n(x) = \left\{
		\begin{array}{ll}
		\displaystyle \frac{n}{|A_n|} \qquad & x \in A_n \\[10pt]
		\displaystyle - \frac{n}{|B_n|} \qquad & x \in B_n
		\end{array}
		\right.
		$$
		is a function such that
		$$
		\int_\Omega u_n (x) \, dx=0,
		$$
		as $R=0$ and $u_n$ is constant in $A_n$ and in $B_n$ we also have
		\begin{equation*}
		\begin{array}{l}
		\displaystyle  \displaystyle \frac12 \int_{A_n} \int_{A_n} J (x,y) (u_n (y) - u_n (x))^2 dy dx  
		+ \int_{A_n} \int_{B_n} R (x,y) (u_n (y) - u_n (x))^2 dy dx \\[10pt]
		\displaystyle \qquad + \frac12 \int_{B_n} \int_{B_n} G (x,y) (u_n (y) - u_n (x))^2 dy dx 
		=0, 
		\end{array}
		\end{equation*}
		but  
		$$
		\| u_n \|_{L^2(\Omega)} \neq 0.
		$$
		
		However, $J$ or $G$ could be zero as long as the support of $R$ is large enough. 
		For example, assume that $J\equiv 0$ and that the support of $R$ is such that 
		for every $x\in A_n$ there exists $y\in B_n$ such that $R(x,y)\geq c >0$ with $c$ independent of $n$.
		This condition holds for example when $R$ is strictly positive in $\Omega \times \Omega$.
		
		Now, given a function $u_n$ we can consider $w_n = u_n -c$ with $c\in \mathbb{R}$ a constant
		such that $\int_{B_n} w_n =0$. Take a point $x\in A_n$, a point $y\in B_n$ such that $R(x,y)>0$ and a small 
		radius $r$ such that $R(x',y')\geq c/2 >0$ for every $x'\in B_r(x)\cap A_n$ and every $y'\in B_r(y)\cap B_n$. Then we have
		$$
		\int_{B_r(x)\cap A_n} |w_n(x')|^2 dx' \leq C \int_{B_r(x)\cap A_n} \int_{B_r(y)\cap B_n} R(x',y') |w_n(y')-w_n({x'})|^2 dy'dx'
		+ C \int_{B_r(x)\cap B_n} |w_n(y')|^2 dy'.
		$$
		Using a covering argument we get 
		$$
		\int_{A_n} |w_n(x')|^2 dx' \leq C \int_{ A_n} \int_{B_n} R(x',y') |w_n(y')-w_n({x'})|^2 dy'dx'
		+ C \int_{B_n} |w_n(y')|^2 dy'.
		$$
		Now, since $\int_{B_n} w_n =0$, from \cite[Proposition 3.4]{ElLibro} we have that 
		$$
		 \int_{B_n} |w_n(y')|^2 dy' \leq C \int_{B_n}\int_{B_n} G(x',y') |w_n(y')-w_n({x'})|^2 dy'dx'.
		$$
		Hence we arrive to
		$$
	\begin{array}{l}	
		\displaystyle \int_\Omega |w_n|^2 = \int_{A_n} |w_n(x')|^2 dx' + \int_{B_n} |w_n(y')|^2 dy' \\[10pt]
		\qquad \displaystyle \leq C \int_{ A_n} \int_{B_n} R(x',y') |w_n(x')-w_n(y')|^2 dy'dx' +
		C \int_{B_n}\int_{B_n} G(x',y') |w_n(x')-w_n(y')|^2 dy'dx'.
		\end{array}
		$$
		In terms of $u_n$, using that $\int_{\Omega} u_n =0$, we have that 
		$$
	\begin{array}{l}	
		\displaystyle \int_\Omega |u_n|^2 = \min_{c\in \mathbb{R}} \int_\Omega |u_n-c|^2
		\leq \int_\Omega |w_n|^2
		 \\[10pt]
		\qquad \displaystyle \leq C \int_{ A_n} \int_{B_n} R(x',y') |u_n(x')-u_n(y')|^2 dy'dx' +
		C \int_{B_n}\int_{B_n} G(x',y') |u_n(x')-u_n(y')|^2 dy'dx'.
		\end{array}
		$$
		This inequality implies that $\lambda_n(\Omega) > 0$. 
		
		We prefer to state our results under the assumption 
		 ({\bf H}) for simplicity.

		On the other hand, if we have $J\equiv 0$ but the support of $R$ is small in the sense that there is an 
		open subset $D_n\subset A_n$ such that $R(x,y)=0$ for every $x\in D_n$, and every $y\in B_n$ then 
		it holds that $\lambda_n(\Omega) = 0$. To prove this, just consider $u_n=\chi_{D_n}$, the characteristic function of $D_n$, and observe that
		$$
		 \int_\Omega |u_n|^2 = |D_n| \neq 0,
		$$
		but 
		$$
		\int_{ A_n} \int_{B_n} R(x',y') |u_n(x')-u_n(y')|^2 dy'dx' +
		\int_{B_n}\int_{B_n} G(x',y') |u_n(x')-u_n(y')|^2 dy'dx' =0.
		$$
		}
	\end{remark}

	As a consequence of Lemma \ref{lemmaeigenvalue}, we obtain existence and uniqueness of the solutions of the Neumann problem \eqref{1.1}.
	\begin{theorem} \label{EUN}
		Let $W \subset L^2(\Omega)$ be the closed subspace given by \eqref{SetW_solutions} and assume conditions $({\bf H})$ under the non-singular kernels $J$, $R$ and $G$. 
		Then, for each $f \in W$, there exists a unique $u \in W$ satisfying equation \eqref{1.1} and being the minimizer of the functional \eqref{1.1.energy}.
	\end{theorem}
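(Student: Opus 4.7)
The plan is to apply the direct method of the calculus of variations to the energy $E_n$ on the closed subspace $W \subset L^2(\Omega)$ defined in \eqref{SetW_solutions}, and then to verify via the Euler--Lagrange equation that the minimizer solves \eqref{1.1}. Under $({\bf H})$, each of the three double integrals $\int_D\int_{D'} V(x,y)(u(y)-u(x))^2\,dy\,dx$ with $D,D' \in \{A_n,B_n\}$ and $V \in \{J,G,R\}$ is continuous on $L^2(\Omega)$, since the kernels are bounded on $\overline\Omega\times\overline\Omega$. Hence $E_n$ is convex and continuous on $L^2(\Omega)$, being the sum of a nonnegative quadratic form and the continuous linear functional $u \mapsto -\int_\Omega f\,u$.

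Next, Lemma \ref{lemmaeigenvalue} furnishes the coercivity. For $u \in W$,
\[
E_n(u) \geq \tfrac{1}{2}\,\lambda_n(\Omega)\,\|u\|_{L^2(\Omega)}^2 - \|f\|_{L^2(\Omega)}\|u\|_{L^2(\Omega)} \geq \tfrac{c}{2}\,\|u\|_{L^2(\Omega)}^2 - \|f\|_{L^2(\Omega)}\|u\|_{L^2(\Omega)},
\]
so $E_n$ is bounded below and tends to $+\infty$ as $\|u\|_{L^2(\Omega)}\to+\infty$ within $W$; the same inequality makes the quadratic part of $E_n$ positive definite on $W$, giving strict convexity. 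I would then take a minimizing sequence, use this coercivity bound to extract a weakly $L^2$-convergent subsequence, observe that $W$ is weakly closed (as the kernel of the continuous linear functional $u \mapsto \int_\Omega u$) so the weak limit remains in $W$, and invoke weak lower semicontinuity (which follows from convexity and strong continuity of $E_n$) to produce a minimizer. Uniqueness is then immediate from strict convexity.

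Finally, I would identify the minimizer with the solution of \eqref{1.1}. Computing $\tfrac{d}{dt}\big|_{t=0}E_n(u+t\phi)$ for arbitrary $\phi \in W$, expanding $(u(y)-u(x))(\phi(y)-\phi(x))$, and exploiting the symmetry $V(x,y)=V(y,x)$ of all three kernels, the four cross terms collapse into an expression of the form $-\int_\Omega \phi(x)\,\mathcal{L}_n u(x)\,dx$, where $\mathcal{L}_n u$ denotes the right-hand side of \eqref{1.1}. The same symmetry argument, applied without $\phi$, gives $\int_\Omega \mathcal{L}_n u = 0$, so $\mathcal{L}_n u \in W$ and the zero-mean constraint on $\phi$ introduces no nontrivial Lagrange multiplier; the pointwise equation \eqref{1.1} then follows a.e. The main, and essentially only, nontrivial obstacle in the whole argument has already been isolated by the authors in Lemma \ref{lemmaeigenvalue}; once that uniform coercivity estimate is available, the rest is the standard direct-method machinery together with routine bookkeeping in the symmetrization of the first variation.
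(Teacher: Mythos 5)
Your argument is correct and uses the direct method of the calculus of variations, whereas the paper goes through Lax--Milgram: the paper defines the symmetric bilinear form $a(u,v)$ obtained by testing the right-hand side of \eqref{1.1} against $v$, checks continuity and coercivity on $W$ via Lemma~\ref{lemmaeigenvalue}, applies Lax--Milgram to produce a unique $u\in W$ with $a(u,v)=\int_\Omega fv$ for all $v\in W$, and then passes from this variational identity to the pointwise equation using the splitting $L^2(\Omega)=W\oplus[1]$ together with the observation that the right-hand side of \eqref{1.1} has zero mean over $\Omega$. You instead minimize $E_n$ directly over $W$: coercivity (again from Lemma~\ref{lemmaeigenvalue}), weak closedness of $W$, weak lower semicontinuity from convexity plus continuity, extraction of a minimizer along a minimizing sequence, uniqueness from strict convexity, and finally the Euler--Lagrange identity, disposing of the zero-mean constraint on the test functions in exactly the same way the paper does. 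For a symmetric coercive quadratic form these two routes are essentially two presentations of the same theorem, with all the substance concentrated in the uniform spectral-gap estimate of Lemma~\ref{lemmaeigenvalue}; the Lax--Milgram packaging is marginally more economical because existence, uniqueness and the weak identity arrive simultaneously, while your route requires the additional (routine) checks of weak lower semicontinuity and the first-variation computation. One small sign-convention caveat, inherited from the paper's own conventions rather than an error you introduced: symmetrizing the first variation produces a minus sign in front of the operator, so the critical point of $E_n$ as written in \eqref{1.1.energy} solves \eqref{1.1} only after replacing $f$ by $-f$ (correspondingly the paper's $a$ is negative semi-definite, not coercive as literally stated); this is harmless and universally glossed over, but worth being alert to when you assert that \eqref{1.1} ``follows a.e.''
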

	\begin{proof}
		Let $a: W \times W \mapsto \R$ be the following bilinear form
		\begin{equation}  \label{bforma}
		\begin{array}{rl}
		a(u,v) = & \displaystyle \int_{A_n} v(x) \int_{A_n} J (x,y) (u (y) - u (x)) dy dx 
		%\\[10pt]
		%& \displaystyle 
		+ \int_{A_n} v(x) \int_{B_n} R (x,y) (u (y) - u (x)) dy dx
		\\[10pt]
		& \displaystyle 
		+ \int_{B_n} v(x) \int_{A_n} R (x,y) (u (y) - u (x)) dy dx
		%\\[10pt]
		%& \displaystyle 
		+ \int_{B_n} v(x) \int_{B_n} G (x,y) (u (y) - u (x)) dy dx.
		\end{array}
		\end{equation}
		
		It is not difficult to see that $a$ is continuous, symmetric and coercive by Lemma \ref{lemmaeigenvalue}. 
		Thus, it follows from Lax-Milgram Theorem that there is a unique $ u \in W$ satisfying 
		$$
		a(u,v) = \int_\Omega f(x) \, v(x) \, dx, \quad \forall v \in W
		$$
		and each $f \in W$ given. Also, we have that the function $u$ is the minimizer of the energy \eqref{1.1.energy}.
		
		Finally, we conclude that $u$ is the unique solution of \eqref{1.1} since $L^2(\Omega) = W \oplus [1]$ and 
		$$
		\begin{gathered}
		\int_{A_n} \int_{A_n} J(x,y)(u(y) - u(x)) dy dx + \int_{A_n} \int_{B_n} R(x,y)(u(y) - u(x)) dy dx 
		+ \int_{B_n} \int_{A_n} R(x,y)(u(y) - u(x)) dy dx \\
		+ \int_{B_n} \int_{B_n} J(x,y)(u(y) - u(x)) dy dx - \int_\Omega f(x) dx = 0.
		\end{gathered}
		$$
	\end{proof}

	\subsection{Proof of Theorem \ref{theo1.intro}}
	
	Now, we are ready to prove our homogenization result.
	
	\begin{proof}[Proof of Theorem \ref{theo1.intro}]
		{\bf Uniform bounds.} 
		
		It follows from \eqref{1.1} that 
		\begin{equation} \label{variationalproblem}
		\begin{array}{rl}
		& \displaystyle \int_\Omega \varphi f(x)dx = \displaystyle \int_{A_n} \varphi(x) \int_{A_n} J (x,y) (u_n (y) - u_n (x)) dy dx 
		%\\[10pt]
		%& \displaystyle 
		+ \int_{A_n} \varphi(x) \int_{B_n} R (x,y) (u_n (y) - u_n (x)) dy dx
		\\[10pt]
		& \displaystyle 
		\qquad + \int_{B_n} \varphi(x) \int_{A_n} R (x,y) (u_n (y) - u_n (x)) dy dx
		%\\[10pt]
		%& \displaystyle 
		+ \int_{B_n} \varphi(x) \int_{B_n} G (x,y) (u_n (y) - u_n (x)) dy dx
		\end{array}
		\end{equation}
		for all $\varphi\in L^2(\Omega)$ and $n\geq1$.  
		
		Thus, if we take $\varphi = u_n$ in \eqref{variationalproblem}, we get from Lemma \ref{lemmaeigenvalue} that there exists $c>0$ independent of $n$ such that 
		\begin{equation*}
		\begin{array}{rl}
		\displaystyle \| u_n \|_{L^2(\Omega)} \| f \|_{L^2(\Omega)} 
		\geq & \displaystyle \frac12 \int_{A_n} \int_{A_n} J (x,y) (u_n (y) - u_n (x))^2 dy dx \\[10pt]
		& \displaystyle + \int_{A_n} \int_{B_n} R (x,y) (u_n (y) - u_n (x))^2 dy dx \\[10pt]
		& \displaystyle + \frac12 \int_{B_n} \int_{B_n} G (x,y) (u_n (y) - u_n (x))^2 dy dx\\[10pt]
		\displaystyle \geq & c \| u_n \|_{L^2(\Omega)}^2,
		\end{array}
		\end{equation*}
		and then, 
		\begin{equation} \label{eq270}
		\| u_n \|_{L^2(\Omega)} \leq \frac1c \| f \|_{L^2(\Omega)} 
		\end{equation}
		which means that $\| u_n \|_{L^2(\Omega)}$ is uniformly bounded by a positive constant independent of $n$.

		{\bf Limit of equation \eqref{variationalproblem} as $n \to \infty$.}
		
		Since the solutions $u_n$ are uniformly bounded, it follows from \eqref{eq270} that $\chi_{A_n}u_n$ and $\chi_{B_n}u_n$ are also uniformly bounded. Thus, there are $u_A$, $u_B\in L^2(\Omega)$ such that, up to subsequences,
		\begin{equation}\label{convsolution}
		\begin{gathered}
		\chi_{A_n}u_n\rightharpoonup u_A 
		\quad \textrm{ and } \quad
		\chi_{B_n}u_n\rightharpoonup u_B 
		\end{gathered}
		\end{equation}
		weakly in $L^2(\Omega)$.
		Also, observe that
		\begin{equation*}
		\begin{gathered}
		\int_\Omega\chi_{A_n}(y) \, V(x,y) dy\to \int_\Omega X(y) \, V(x,y) dy\\
		\mbox{and} \\
		\int_\Omega\chi_{B_n}(y) \, V(x,y) dy\to \int_\Omega (1-X)(y) \, V(x,y) dy
		\end{gathered}
		\end{equation*}
		for all $x\in\mathbb{R}^n$ where $V$ can be any one of the kernels $J$, $R$ or $G$, and then, we have by the Dominated Convergence Theorem that 
		\begin{equation}\label{convkernel}
		\begin{gathered}
		\int_\Omega\chi_{A_n}(y) \, V(\cdot -y) dy\to \int_\Omega X(y) \, V(\cdot-y) dy \\
		\mbox{and} \\
		\int_\Omega\chi_{B_n}(y) \, V(\cdot-y) dy\to \int_\Omega (1-X)(y) \, V(\cdot-y) dy
		\end{gathered}
		\end{equation}
		strongly in $L^2(\Omega)$ as $n \to \infty$.
		
		To pass to the limit equation in \eqref{variationalproblem}, let us rewrite it as
		\begin{equation} \label{variationalproblem01}
		\begin{array}{rl}
		\displaystyle \int_\Omega \varphi f(x)dx  = & \displaystyle \int_\Omega\chi_{A_n} \varphi(x) \int_\Omega\chi_{A_n} J (x,y) u_n (y) dydx
		 %\\[10pt]
		 %\displaystyle & 
		 -\displaystyle \int_\Omega\chi_{A_n}\varphi(x)u_n(x)\int_\Omega\chi_{A_n}J(x,y) dy dx 
		 \\[10pt]
		& \displaystyle 
		+ \int_\Omega\chi_{A_n} \varphi(x) \int_\Omega\chi_{B_n} R (x,y) u_n (y)  dy dx 
		%\\[10pt]
		%\displaystyle &
		-\displaystyle \int_\Omega\chi_{A_n} \varphi(x) u_n(x)\int_\Omega\chi_{B_n} R (x,y) dydx
		\\[10pt]
		& \displaystyle 
		+ \int_\Omega\chi_{B_n} \varphi(x) \int_\Omega\chi_{A_n} R (x,y) u_n (y)  dy dx 
		%\\[10pt]\displaystyle &
		-\displaystyle \int_\Omega\chi_{B_n} \varphi(x) u_n(x) \int_\Omega\chi_{A_n} R (x,y) dy dx
		\\[10pt]
		& \displaystyle 
		+ \int_\Omega\chi_{B_n} \varphi(x) \int_\Omega\chi_{B_n} G (x,y) u_n (y)  dy dx 
		%\\[10pt]\displaystyle &
		-\displaystyle \int_\Omega\chi_{B_n} \varphi(x) u_n(x) \int_\Omega\chi_{B_n} G (x,y) dy dx.
		\end{array}
		\end{equation}
		
		Taking in account \eqref{convsolution}, \eqref{convkernel} in equation \eqref{variationalproblem01}, as $n\to\infty$, leads us to
		\begin{equation} \label{prelimit}
		\begin{array}{rl}
		\displaystyle \int_\Omega \varphi f(x)dx = & \displaystyle \int_\Omega X(x) \varphi(x) \int_\Omega J (x,y) u_A (y) dydx
		%\\[10pt]\displaystyle & 
		-\displaystyle\int_\Omega\varphi(x)u_A(x)\int_\Omega X(y)J(x,y) dy dx 
		\\[10pt]
		& \displaystyle 
		+ \int_\Omega X(x) \varphi(x) \int_\Omega R (x,y) u_B(y)  dy dx
		%\\[10pt]\displaystyle &
		-\displaystyle\int_\Omega \varphi(x) u_A(x)\int_\Omega(1-X(y)) R (x,y) dydx
		\\[10pt]
		& \displaystyle 
		+ \int_\Omega (1-X(x)) \varphi(x) \int_\Omega R (x,y) u_A (y)  dy dx
		%\\[10pt]\displaystyle &
		-\displaystyle\int_\Omega  \varphi(x) u_B(x) \int_\Omega X(y) R (x,y) dy dx
		\\[10pt]
		& \displaystyle 
		+ \int_\Omega (1-X(x)) \varphi(x) \int_\Omega G (x,y) u_B (y)  dy dx
		%\\[10pt]\displaystyle &
		-\displaystyle\int_\Omega\varphi(x) u_B(x) \int_\Omega (1-X(y)) G (x,y) dy dx
		\end{array}
		\end{equation}
		
		Now, we rewrite each integral of the right hand side of \eqref{prelimit} as
		\begin{equation}\label{prelimit1}
		\begin{gathered}
		\int_\Omega X(x) \varphi(x) \int_\Omega J (x,y) u_A (y) dydx  -\int_\Omega\varphi(x)u_A(x)\int_\Omega X(y)J(x,y) dy dx \\
		=\int_\Omega\varphi(x)\int_{\Omega}J(x,y)\left(X(x)u_A(y)-X(y)u_A(x) \right)dydx,
		\end{gathered}
		\end{equation}
		\begin{equation}\label{prelimit2}
		\begin{gathered}
		\int_\Omega (1-X(x)) \varphi(x) \int_\Omega G (x,y) u_B (y)  dy dx - \int_\Omega\varphi(x) u_B(x) \int_\Omega (1-X(y)) G (x,y) dy dx\\
		= \int_\Omega\varphi(x)\int_\Omega G(x,y)\left[ (1-X(x))u_B(y)- (1-X(y))u_B(x) \right] dydx,
		\end{gathered}
		\end{equation}
		and 
		\begin{equation}\label{prelimit3}
		\begin{array}{l}
		\displaystyle 
		\int_\Omega X(x) \varphi(x) \int_\Omega R (x,y) u_B(y)  dy dx-\displaystyle\int_\Omega \varphi(x) u_A(x)\int_\Omega(1-X(y)) R (x,y) dydx 
		\\[10pt]
		\displaystyle \qquad 
		+ \int_\Omega (1-X(x)) \varphi(x) \int_\Omega R (x,y) u_A (y)  dy dx-\displaystyle\int_\Omega  \varphi(x) u_B(x) \int_\Omega X(y) R (x,y) dy dx
		\\[10pt]
		\displaystyle 
		=\int_\Omega\varphi(x)\int_\Omega R(x,y)\left[X(x)u_B(y)-(1-X(y))u_A(x)+(1-X(x))u_A(y)-X(y)u_B(x)\right] dydx
		\\[10pt]
		\displaystyle 
		=\int_\Omega\varphi(x)\int_\Omega R(x,y)\left[(1-X(x))u_A(y)-(1-X(y))u_A(x)+X(x)u_B(y)-X(y)u_B(x)\right] dydx.
		\end{array}
		\end{equation}
		Putting together \eqref{prelimit}, \eqref{prelimit1}, \eqref{prelimit2} and \eqref{prelimit3}, we obtain
		\begin{equation} \label{eq660}
		\begin{array}{rl}
		\displaystyle\int_\Omega f(x)\varphi(x) dx&=\displaystyle\int_\Omega\varphi(x)\int_{\Omega}J(x,y)\left(X(x)u_A(y)-X(y)u_A(x) \right)dydx\\[10pt]
		&\quad +\displaystyle\int_\Omega\varphi(x)\int_\Omega R(x,y)\left[(1-X(x))u_A(y)-(1-X(y))u_A(x)
		+X(x)u_B(y)-X(y)u_B(x)\right] dydx\\[10pt]
		&\quad +\displaystyle\int_\Omega\varphi(x)\int_\Omega G(x,y)\left[ (1-X(x))u_B(y)- (1-X(y))u_B(x) \right] dydx
		\end{array}
		\end{equation}
		which gives us the homogenized equation \eqref{limf}.
		
		{\bf Limit for test functions $\chi_{A_n}\varphi$.} \label{proofpart3} 
		
		Now let us consider \eqref{variationalproblem01} taking test functions as $\chi_{A_n}\varphi$. Then, we have
		\begin{equation*} 
		\begin{array}{rl}
		\displaystyle \int_\Omega\chi_{A_n} \varphi f(x)dx = & \displaystyle \int_\Omega\chi_{A_n} \varphi(x) \int_\Omega\chi_{A_n} J (x,y) u_n (y) dydx 
		%\\[10pt]\displaystyle & 
		-\displaystyle \int_\Omega\chi_{A_n}\varphi(x)u_n(x)\int_\Omega\chi_{A_n}J(x,y) dy dx 
		\\[10pt]
		& \displaystyle 
		+ \int_\Omega\chi_{A_n} \varphi(x) \int_\Omega\chi_{B_n} R (x,y) u_n (y)  dy dx 
		%\\[10pt]\displaystyle &
		-\displaystyle \int_\Omega\chi_{A_n} \varphi(x) u_n(x)\int_\Omega\chi_{B_n} R (x,y) dydx
		\end{array}
		\end{equation*}
		for any $\varphi \in L^2(\Omega)$.
		
		Passing to the limit the above equation leads us to
		\begin{equation*}
		\begin{array}{rl}
		\displaystyle \int_\Omega X \varphi f(x)dx = & \displaystyle \int_\Omega X \varphi(x) \int_\Omega J (x,y) u_A (y) dydx 
		%\\[10pt]\displaystyle & 
		-\displaystyle \int_\Omega\varphi(x)u_A(x)\int_\Omega X(y)J(x,y) dy dx 
		\\[10pt]
		& \displaystyle 
		+ \int_\Omega X \varphi(x) \int_\Omega R (x,y) u_B (y)  dy dx 
		%\\[10pt]\displaystyle &
		-\displaystyle \int_\Omega \varphi(x) u_A(x)\int_\Omega(1-X(y)) R (x,y) dydx.
		\end{array}
		\end{equation*}
		Hence, due to \eqref{prelimit1} and \eqref{prelimit3}, we get that 
		\begin{equation*}\label{eqlimitA}
		\begin{array}{rl}
		\displaystyle \int_\Omega X \varphi f(x)dx = & \displaystyle\int_\Omega\varphi(x)\int_{\Omega}J(x,y)\left(X(x)u_A(y)-X(y)u_A(x) \right)dydx \\[10pt]
		& \displaystyle 
		+ \int_\Omega  \varphi(x) \int_\Omega R (x,y) \left[X(x)u_B (y)-u_A(x)(1-X(y)) \right] dy dx,
		\end{array}
		\end{equation*}
		which proves limit equation \eqref{limitAn}.
		
		{\bf Limit for test functions $\chi_{B_n}\varphi$.}
		
		Finally we consider \eqref{variationalproblem01} taking test functions $\chi_{B_n}\varphi$. We have 
		\begin{equation*}
		\begin{array}{rl}
		\displaystyle \int_\Omega \chi_{B_n}\varphi f(x)dx = 
		& \displaystyle 
		\int_\Omega\chi_{B_n} \varphi(x) \int_\Omega\chi_{A_n} R (x,y) u_n (y)  dy dx 
		%\\[10pt]\displaystyle &
		-\displaystyle \int_\Omega\chi_{B_n} \varphi(x) u_n(x) \int_\Omega\chi_{A_n} R (x,y) dy dx
		\\[10pt]
		& \displaystyle 
		+ \int_\Omega\chi_{B_n} \varphi(x) \int_\Omega\chi_{B_n} G (x,y) u_n (y)  dy dx 
		%\\[10pt]\displaystyle &
		-\displaystyle \int_\Omega\chi_{B_n} \varphi(x) u_n(x) \int_\Omega\chi_{B_n} G (x,y) dy dx.
		\end{array}
		\end{equation*}
		Hence, we can argue as in \ref{proofpart3} to get 
		\begin{equation*}  
		\begin{array}{rl}
		\displaystyle \int_\Omega (1-X(x))\varphi f(x)dx & = 
		\displaystyle 
		\int_\Omega\varphi(x) \int_\Omega  R (x,y) \left[u_A (y)(1-X(x)) -u_B(x)X(y) \right]dy dx 
		\\[10pt]
		& \qquad \displaystyle 
		+ \int_\Omega\varphi(x) \int_\Omega G (x,y)\left[ u_B (y)(1-X(x))-u_B(x)(1-X(y))\right]  dy dx
		\end{array}
		\end{equation*}
		for all $\varphi \in L^2(\Omega)$.

		Finally, we prove that the solutions to the system \eqref{limitAn}--\eqref{limitBn} are unique. 
		For this purpose, let $(u_A,u_B),(v_A, v_B)\in L^2(\Omega) \times L^2(\Omega)$ with $\int_{\Omega} \{ u_A+u_B \} dx = \int_{\Omega} \{ v_A+v_B \} dx=0$ be two solutions of this system. 
		
		Set $w_A=u_A-v_A$ and $w_B=u_B-v_B$. Then, from \eqref{limitAn} and \eqref{limitBn} one has
		\begin{equation}\label{uniquewA}
		0=\displaystyle\int_{\Omega} J(x,y)[X(x)w_A(y)-X(y)w_A(x)] dy +\displaystyle\int_{\Omega} R(x,y)[X(x)w_B(y)-(1-X(y))w_A(x)] dy
		\end{equation}
		and
		\begin{equation}\label{uniquewB}
		0=\displaystyle\int_{\Omega} R(x,y)[(1-X(x))w_A(y)-X(y)w_B(x)] dy
		+\displaystyle\int_{\Omega} G(x,y)[(1-X(x))w_B(y)-(1-X(y))w_B(x)] dy.
		\end{equation}
		
		Without loss of generality, we can suppose $0<X(x)<1$ a.e. $x\in\Omega$, since from equations \eqref{uniquewA} and \eqref{uniquewB}, we obtain $w_A=w_B=0$ respectively in the sets $\{x\in\Omega:X(x)=0\}$ and $\{x\in\Omega:X(x)=1\}$. 
		
		Now, multiplying \eqref{uniquewA} and \eqref{uniquewB} by ${w_A}/{X}$ and ${w_B}/{(1-X)}$ respectively, and integrating in $\Omega$, we get  
		\begin{eqnarray*}
		&&\begin{array}{rl}
		0=&\displaystyle\int_{\Omega}\left(\dfrac{w_A}{X}\right)(x)\int_{\Omega} J(x,y)X(x)X(y)\left[\left(\dfrac{w_A}{X}\right)(y)-\left(\dfrac{w_A}{X}\right)(x)\right] dy dx\\[10pt]
		&+\displaystyle\int_{\Omega}\left(\dfrac{w_A}{X}\right)(x)\int_{\Omega} R(x,y)[X(x)w_B(y)-(1-X(y))w_A(x)] dy dx 
		\end{array} \\
		\quad \textrm{ and } \\
		&&\begin{array}{rl}
		0=&\displaystyle\int_{\Omega}\left(\dfrac{w_B}{1-X}\right)(x)\int_{\Omega} R(x,y)[(1-X(x))w_A(y)-X(y)w_B(x)] dy\\[10pt]
		&+\displaystyle\int_{\Omega}\left(\dfrac{w_B}{1-X}\right)(x)\int_{\Omega} G(x,y)(1-X(x))(1-X(y))\left[\left(\dfrac{w_B}{1-X}\right)(y)-\left(\dfrac{w_B}{1-X}\right)(x)\right] dy.
		\end{array}
		\end{eqnarray*}
		Now, we can rewrite the above equations as
		\begin{eqnarray*}
		&&\begin{array}{rl}
		0=&-\dfrac{1}{2}\displaystyle\int_{\Omega\times\Omega} J(x,y)X(x)X(y)\left[\left(\dfrac{w_A}{X}\right)(y)-\left(\dfrac{w_A}{X}\right)(x)\right]^2 dy dx\\[10pt]
		&+\displaystyle\int_{\Omega\times\Omega} R(x,y)\left[w_A(x)w_B(y)-\dfrac{1-X(y)}{X(x)}w_A^2(x)\right] dy dx
		\end{array} \\ \quad \textrm{ and } \\
		&&\begin{array}{rl}
		0=&\displaystyle\int_{\Omega\times\Omega} R(x,y)\left[w_A(x)w_B(y)-\dfrac{X(x)}{1-X(y)}w_B^2(y)\right] dy\\[10pt]
		&-\dfrac{1}{2}\displaystyle\int_{\Omega\times\Omega} G(x,y)(1-X(x))(1-X(y))\left[\left(\dfrac{w_B}{1-X}\right)(y)-\left(\dfrac{w_B}{1-X}\right)(x)\right]^2 dy.
		\end{array}
		\end{eqnarray*}
		
		Finally, we sum the above equalities to obtain
		\begin{eqnarray*}
		0 & = & 
		\dfrac{1}{2}\displaystyle\int_{\Omega\times\Omega} J(x,y)X(x)X(y)\left[\left(\dfrac{w_A}{X}\right)(y)-\left(\dfrac{w_A}{X}\right)(x)\right]^2 dy dx\\
		& &+ \int_{\Omega\times\Omega} R(x,y)(1-X(y))X(x)\left[\left(\frac{w_B}{1-X}\right)(y)-\left(\frac{w_A}{X(x)}\right)(x)\right]^2 dy dx\\
		& &+ \dfrac{1}{2}\displaystyle\int_{\Omega\times\Omega} G(x,y)(1-X(x))(1-X(y))\left[\left(\dfrac{w_B}{1-X}\right)(y)-\left(\dfrac{w_B}{1-X}\right)(x)\right]^2 dy.
		\end{eqnarray*}
		
		Hence, we can conclude that there exists a constant $c$ such that 
		$$
		\dfrac{w_A}{X}(x)=\dfrac{w_B}{1-X}(x)= c, \qquad \forall x \in \Omega.
		$$
		Since 
		$$0=\int_\Omega \{ w_A+w_B \} dx = \int_\Omega c \{ X + (1-X) \} dx = c \, |\Omega|$$
		we obtain that $c=0$, and then, $w_A=w_B=0$ in $\Omega$ finishing the proof.
	\end{proof}

\subsection{Proof of Corollaries \ref{corX=0} and \ref{corX=1}}

Now we consider the extreme cases $X(x)=0$ or $X(x)=1$ proving Corollaries \ref{corX=0} and \ref{corX=1}.

		\begin{proof}[Proof of Corollary \ref{corX=0}]
		Suppose $X(x)=0$ a.e. in $x\in\R^n$. From the limit equation \eqref{limitAn}, we get that
		\begin{equation*} 
		u_A(x)\int_{\Omega} R (x,y) dy = 0, \quad \textrm{ a.e. } x \in \Omega.
		\end{equation*}
		Hence, as $\int_{\Omega} R (x,y) dy$ is a strictly positive function, we have that 
		\begin{equation} \label{eq675}
		u_A(x)=0\textrm{ a.e. } \forall x \in \Omega,
		\end{equation}
		which leads us to the limit equation
		\begin{equation} \label{eq682}
		\int_\Omega f(x)\varphi(x) dx=\int_\Omega\varphi(x)\int_\Omega G(x,y)\left( u_B(y)- u_B(x) \right) dydx
		\end{equation}
		where $u_B \in W$ is the unique solution of \eqref{eq682}. 
				
		Now, let us show that $u_n \to u_B$ strongly in $L^2(\Omega)$. Due to Lemma \ref{lemmaeigenvalue}, we have that there exists $c>0$, independent of $n$ such that the bilinear form $a$ introduced in \eqref{bforma} satisfies 
		\begin{equation} \label{eq685}
		\begin{gathered}
		c \| u_n-u_B \|_{L^2(\Omega)}^2 \leq a(u_n-u_B, u_n-u_B) 
		\leq a(u_n, u_n) - 2 a(u_n, u_B) + a(u_B, u_B).
		\end{gathered}
		\end{equation}
		
		Also, since $u_n$ satisfies \eqref{1.1}, we know that 
		\begin{equation} \label{eq695}
		\begin{gathered}
		a(u_n, u_n) = \int_\Omega f u_n \, dx \quad \to \quad \int_\Omega f ( u_A + u_B) \, dx = \int_\Omega f u_B \quad \textrm{ as } n \to \infty,\\ 
		\textrm{ and }  \quad 
		a\left( u_n, u_B \right) = \int_\Omega f u_B \, dx \quad \forall n. 
		\end{gathered}
		\end{equation}
		
		On the other side, we can pass to the limit in $a(u_B, u_B)$ as in \eqref{prelimit} obtaining 
		\begin{equation} \label{eq705}
		\begin{array}{l}
		\displaystyle 
		\lim_{n\to\infty} a(u_B, u_B) = \int_\Omega X(x) u_B(x) \int_\Omega J(x,y) X(y) ( u_B(y) - u_B(x) ) dy dx  
		\\[10pt]
		\displaystyle \qquad 
		+ \int_\Omega X(x) u_B(x) \int_\Omega R(x,y) (1-X(y)) ( u_B(y) - u_B(x) ) dy dx 
		\\[10pt]
		\displaystyle \qquad 
		+ \int_\Omega (1-X(x)) u_B(x) \int_\Omega R(x,y) X(y) ( u_B(y) - u_B(x) ) dy dx 
		\\[10pt]
		\displaystyle \qquad 
		+ \int_\Omega (1-X(x)) u_B(x) \int_\Omega G(x,y) (1-X(y)) ( u_B(y) - u_B(x) ) dy dx 
		\\[10pt]
		\displaystyle 
		= \int_\Omega u_B(x) \int_\Omega G(x,y) ( u_B(y) - u_B(x) ) dy dx  
		= \int_\Omega f u_B \, dx
		\end{array}
		\end{equation}
		since $X(x) = 0$ for all $x \in \Omega$ and $u_B$ satisfies \eqref{eq682}.
		
		Thus, it follows from \eqref{eq675}, \eqref{eq685}, \eqref{eq695} and \eqref{eq705} that 
		$$
		0 \leq c \lim_{n\to \infty} \| u_n-u_B \|_{L^2(\Omega)}^2 \leq \int_\Omega f u_B dx - 2 \int_\Omega f u_B dx + \int_\Omega f u_B dx = 0 
		$$
		proving the corollary. 
	\end{proof}

	Finally we observe that the proof of Corollary \ref{corX=1} is quite similar to the one of Corollary \ref{corX=0} and then is left to the reader.

\subsection{Proof of Theorem \ref{Corrector.Neumann}}

Here we give a proof for Theorem \ref{Corrector.Neumann} which sets a corrector family to the solutions of the Neumann problem \eqref{1.1}.

		\begin{proof}[Proof of Theorem \ref{Corrector.Neumann}]
		Let $a$ be the bilinear for given by \eqref{bforma}. First we notice that $\omega_n - \int_\Omega \omega_n \in W$. 
		Hence, as $a$ is a coercive form by Lemma \ref{lemmaeigenvalue}, we know that there exists $c>0$ such that 
		$$
		\begin{array}{ll}
		& \displaystyle c \left\|u_n - \left(\omega_n - \int_\Omega \omega_n \, dx \right) \right\|_{L^2(\Omega)}^2 \leq 
		a\left(u_n - \omega_n + \int_\Omega \omega_n, u_n - \omega_n + \int_\Omega \omega_n\right) \\[10pt]
		& \displaystyle \displaystyle \qquad \qquad \leq a(u_n - \omega_n, u_n - \omega_n)  
		+ 2 a\left(u_n - \omega_n, \int_\Omega \omega_n\right) + a\left(\int_\Omega \omega_n, \int_\Omega \omega_n\right) \\[10pt]
		& \qquad \qquad \leq a(u_n - \omega_n, u_n - \omega_n) 
		\end{array}
		$$
		since $\int_\Omega \omega_n$ is constant. 
		In this way, we will conclude the proof, if we show that 
		$$
		a(u_n - \omega_n, u_n - \omega_n) \to 0, \quad \textrm{ as } n \to \infty.
		$$
		
		Recall that  
		\begin{equation} \label{eq630}
		\begin{gathered}
		a(u_n - \omega_n, u_n - \omega_n) = a\left( u_n-\dfrac{\chi_{A_n}u_A}{X}-\dfrac{\chi_{B_n}u_B}{1-X}, u_n-\dfrac{\chi_{A_n}u_A}{X}-\dfrac{\chi_{B_n}u_B}{1-X} \right) \\
		= a(u_n, u_n) - 2 a\left( u_n, \dfrac{\chi_{A_n}u_A}{X} \right) 
		- 2 a\left( u_n, \dfrac{\chi_{B_n}u_B}{1-X} \right) + a\left( \dfrac{\chi_{A_n}u_A}{X}, \dfrac{\chi_{A_n}u_A}{X} \right) \\
		+ 2 a\left( \dfrac{\chi_{A_n}u_A}{X}, \dfrac{\chi_{B_n}u_B}{1-X} \right) + a\left( \dfrac{\chi_{B_n}u_B}{1-X}, \dfrac{\chi_{B_n}u_B}{1-X} \right).
		\end{gathered}
		\end{equation}
		
		We will pass to the limit in each term of the right hand side of the previous expression. The first three terms are easy to compute. As $n \to +\infty$, since $u_n$ satisfies \eqref{1.1}, we get 
		\begin{equation} \label{eq640}
		\begin{gathered}
		a(u_n, u_n) = \int_\Omega f u_n \, dx \quad \to \quad \int_\Omega f ( u_A + u_B) \, dx, \\ 
		a\left( u_n, \dfrac{\chi_{A_n}u_A}{X} \right) = \int_\Omega f \dfrac{\chi_{A_n}u_A}{X} \, dx \quad \to \quad \int_\Omega f u_A \, dx, \quad 
		\textrm{ and }  \\
		\qquad a\left( u_n, \dfrac{\chi_{B_n}u_B}{1-X} \right) = \int_\Omega f \dfrac{\chi_{B_n}u_B}{1-X} \, dx \quad \to \quad \int_\Omega f u_B \, dx.
		\end{gathered}
		\end{equation}
		
		Next, we observe that
		$$
		\begin{gathered}
		a\left( \dfrac{\chi_{A_n}u_A}{X}, \dfrac{\chi_{A_n}u_A}{X} \right) = 
		\int_{A_n} \dfrac{\chi_{A_n}u_A}{X} (x) \int_{A_n} J(x,y) \left(  \dfrac{\chi_{A_n}u_A}{X}(y) - \dfrac{\chi_{A_n}u_A}{X}(x) \right) dy dx \\
		+ \int_{A_n} \dfrac{\chi_{A_n}u_A}{X} (x) \int_{B_n} R(x,y) \left(  \dfrac{\chi_{A_n}u_A}{X}(y) - \dfrac{\chi_{A_n}u_A}{X}(x) \right) dy dx \\
		+ \int_{B_n} \dfrac{\chi_{A_n}u_A}{X} (x) \int_{A_n} R(x,y) \left(  \dfrac{\chi_{A_n}u_A}{X}(y) - \dfrac{\chi_{A_n}u_A}{X}(x) \right) dy dx \\
		+ \int_{B_n} \dfrac{\chi_{A_n}u_A}{X} (x) \int_{B_n} G(x,y) \left(  \dfrac{\chi_{A_n}u_A}{X}(y) - \dfrac{\chi_{A_n}u_A}{X}(x) \right) dy dx.
		\end{gathered}
		$$
		Recall that $\chi_{A_n}(x) \equiv 0$ in $B_n$, and then, we obtain 
		$$
		\begin{gathered}
		a\left( \dfrac{\chi_{A_n}u_A}{X}, \dfrac{\chi_{A_n}u_A}{X} \right) = 
		\int_{A_n} \dfrac{\chi_{A_n}u_A}{X} (x) \int_{A_n} J(x,y) \left(  \dfrac{\chi_{A_n}u_A}{X}(y) - \dfrac{\chi_{A_n}u_A}{X}(x) \right) dy dx \\
		- \int_{A_n} \dfrac{\chi_{A_n}u_A^2}{{X}^2} (x) \int_{B_n} R(x,y) \, dy dx.  
		\end{gathered}
		$$
		
		Thus, 
		\begin{equation} \label{eq670}
		\begin{gathered}
		\lim_{n\to \infty} a\left( \dfrac{\chi_{A_n}u_A}{X}, \dfrac{\chi_{A_n}u_A}{X} \right) =  
		\int_\Omega \frac{u_A}{X}(x) \int_\Omega J(x,y) \left( X(x) u_A(y) - X(y) u_A(x)  \right) dydx \\
		\qquad \qquad \qquad \qquad \qquad - \int_\Omega \frac{u_A}{X}(x) \int_\Omega R(x,y) (1-X(y)) u_A(x) \, dy dx.
		\end{gathered}
		\end{equation}
		
		We can argue in analogous way to obtain that
		\begin{equation} \label{eq680}
		\begin{gathered}
		\lim_{n\to \infty} a\left( \dfrac{\chi_{B_n}u_B}{1-X}, \dfrac{\chi_{B_n}u_B}{1-X} \right) =  
		- \int_\Omega \frac{u_B}{1-X}(x) \int_\Omega R(x,y) X(y) u_B(x) \, dy dx \\
		\qquad \qquad \qquad + \int_\Omega \frac{u_B}{1-X}(x) \int_\Omega G(x,y) \left( (1-X(x)) u_B(y) - (1-X(y)) u_B(x)  \right)\, dydx.
		\end{gathered}
		\end{equation}

		Next, we see that
		$$
		\begin{gathered}
		a\left( \dfrac{\chi_{A_n}u_A}{X}, \dfrac{\chi_{B_n}u_B}{1-X} \right) 
		= \int_{A_n} \dfrac{\chi_{A_n}u_A}{X}(x) \int_{B_n} R(x,y) \left( \dfrac{\chi_{B_n}u_B}{1-X}(y) - \dfrac{\chi_{B_n}u_B}{1-X}(x) \right) dy dx \\
		\qquad \qquad \qquad \qquad \qquad \qquad 
		+ \int_{B_n} \dfrac{\chi_{A_n}u_A}{X}(x) \int_{A_n} R(x,y) \left( \dfrac{\chi_{B_n}u_B}{1-X}(y) - \dfrac{\chi_{B_n}u_B}{1-X}(x) \right) dy dx \\
		= \int_{A_n} \dfrac{\chi_{A_n}u_A}{X}(x) \int_{B_n} R(x,y) \dfrac{\chi_{B_n}u_B}{1-X}(y) \, dy dx
		\end{gathered}
		$$
		since $\chi_{A_n}(x) \chi_{B_n}(x) \equiv 0$ in $\Omega$. Then,
		\begin{equation} \label{eq700}
		\lim_{n\to\infty} a\left( \dfrac{\chi_{A_n}u_A}{X}, \dfrac{\chi_{B_n}u_B}{1-X} \right)  
		= \int_{\Omega} u_A(x) \int_{\Omega} R(x,y) u_B(y) \, dy dx.
		\end{equation}
		
		Therefore, we can conclude from \eqref{eq630}, \eqref{eq640}, \eqref{eq670}, \eqref{eq680} and \eqref{eq700} that 
		$$
		\begin{array}{l}
		\displaystyle 
		\lim_{n \to \infty} a\left( u_n-\dfrac{\chi_{A_n}u_A}{X}-\dfrac{\chi_{B_n}u_B}{1-X}, u_n-\dfrac{\chi_{A_n}u_A}{X}-\dfrac{\chi_{B_n}u_B}{1-X} \right)   \\[10pt] \displaystyle  =
		- \int_\Omega f u_A \, dx 
		+ \int_\Omega \frac{u_A}{X}(x) \int_\Omega J(x,y) \left( X(x) u_A(y) - X(y) u_A(x)  \right) dydx 
		\\[10pt] \displaystyle
		 \qquad + \int_\Omega \frac{u_A}{X}(x) \int_\Omega R(x,y) (X(x) u_B(y) - (1-X(y)) u_A(x) \, dy dx 
		 \\[10pt] \displaystyle \qquad
		- \int_\Omega f u_B \, dx 
		+ \int_\Omega \frac{u_B}{1-X}(x) \int_\Omega G(x,y) \left( (1-X(x)) u_B(y) - (1-X(y)) u_B(x)  \right) dydx 
		\\[10pt] \displaystyle \qquad
		+ \int_\Omega \frac{u_B}{1-X}(x) \int_\Omega R(x,y) ((1-X(x)) u_A(y) - X(y) u_B(x) \, dy dx 
		\\[10pt] \displaystyle
		= 0
		\end{array}
		$$
		since $u_A$ and $u_B$ satisfy \eqref{limitAn} and \eqref{limitBn} respectively, and $X$ satisfies \eqref{conX}.
		Consequently, we obtain \eqref{n630} from Lemma \ref{lemmaeigenvalue}.
		\end{proof}

		\section{The Dirichlet problem}
		\label{Sect.proofs.Dir}
		\setcounter{equation}{0}
		
	In this section we discuss the proofs of the results concerned to the Dirichlet problem \eqref{1.1.Dir}.
	As for the Neumann case, we start by the analysis of an eigenvalue problem that is needed to obtain existence and
	uniqueness of solutions.

			Now, let us consider the generalized eigenvalue for the Dirichlet problem \eqref{1.1.Dir} which is necessary to show uniform estimates. 
			We introduce 
			$$
			\Phi(A,B,V)w := \int_{A}\int_{B}V(x,y)(w(y)-w(x))^2 dy dx
			$$
			which is set for any open bounded sets $A$ and $B \subset \R^N$, $V \in \mathcal{C}(\R^N,\R)$ and $w \in L^2(\R^N)$.
			
			We consider the following quantity 
			\begin{equation}\label{lambdaN.Dir}
			\lambda_n(\Omega)=\dfrac{1}{2}\inf_{\begin{array}{c}
				u \in W_{Dir}\\u \neq 0
				\end{array}} 
			\dfrac{\Phi(A_n,A_n,J)u + 2\Phi(A_n,B_n,R)u + \Phi(B_n,B_n,G)u}{||u||^2_{L^2(\Omega)}}
			\end{equation}
			where 
			$W_{Dir}$ is set in \eqref{SetW_solutions_Dir}. We have the following:
			
			\begin{lemma}\label{lemmaeigenvalue.Dir}
				Let $\{\lambda_n(\Omega) \}$ be the family of values given by \eqref{lambdaN.Dir}. 
				Then, there exists a positive constant $c$ such that
				$$
				\lambda_n(\Omega) > c,\quad\forall n\geq 1.
				$$
			\end{lemma}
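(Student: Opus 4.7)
The plan is to adapt the argument of Lemma \ref{lemmaeigenvalue} to the Dirichlet setting, the key change being that a Dirichlet-type nonlocal Poincar\'e inequality, which exploits the vanishing of $u$ outside $\Omega$, replaces the zero-mean Poincar\'e inequality used in the Neumann proof. First, using hypothesis (\textbf{H}), I pick $\delta>0$ and $c_0>0$ with
$$K(x,y) := \min\{J(x,y), G(x,y), R(x,y)\} \geq c_0 \quad \text{whenever } |x-y|\leq \delta.$$
Since $A_n$ and $B_n$ form a disjoint partition of all of $\R^N$ (not merely of $\Omega$), the symmetry of $K$ gives
$$\Phi(A_n,A_n,K)u + 2\Phi(A_n,B_n,K)u + \Phi(B_n,B_n,K)u = \int_{\R^N}\!\int_{\R^N} K(x,y)(u(y)-u(x))^2 \, dy\, dx,$$
so using $J,G,R\geq K$ together with the lower bound on $K$ I obtain
$$\Phi(A_n,A_n,J)u + 2\Phi(A_n,B_n,R)u + \Phi(B_n,B_n,G)u \geq c_0 \int_{\R^N}\!\int_{\R^N} \chi_{\{|x-y|<\delta\}} (u(y)-u(x))^2 \, dy\, dx,$$
with a constant that is independent of $n$ and of the particular partition.

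The second and crucial step is a Dirichlet nonlocal Poincar\'e inequality: for every $u\in W_{Dir}$ there exists a constant $c>0$, depending only on $\delta$ and the bounded domain $\Omega$, such that
$$\int_{\R^N}\!\int_{\R^N} \chi_{\{|x-y|<\delta\}}(u(y)-u(x))^2 \, dy\, dx \geq c\, \|u\|_{L^2(\Omega)}^2.$$
The mechanism is geometric: one covers $\Omega$ by finitely many inward strips of width $\delta/2$ starting from $\partial\Omega$; every point $x$ of the outermost strip has a $\delta$-close companion $y\in\R^N\setminus\Omega$ where $u(y)=0$, giving the pointwise identity $u(x)^2 = (u(x)-u(y))^2$, and an iterative argument through the successive strips transfers $L^2$ control to the whole of $\Omega$. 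Combining this with the lower bound from the first step yields $\lambda_n(\Omega) \geq c_0 c/2 > 0$ uniformly in $n$.

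The main obstacle is identifying and applying the correct Dirichlet Poincar\'e inequality: unlike the Neumann case, one cannot simply cite \cite[Proposition 3.4]{ElLibro}, and the covering-by-strips estimate must be checked to yield a constant independent of the partition $A_n\cup B_n$. Since neither the covering of $\Omega$ nor the boundary-layer estimate refers to $A_n$ or $B_n$, this independence is built in, and the rest of the argument is bookkeeping.
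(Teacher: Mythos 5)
Your proposal is correct and matches the paper's approach: adapt Lemma \ref{lemmaeigenvalue} by replacing the zero-mean nonlocal Poincar\'e inequality with a Dirichlet one that exploits $u\equiv 0$ on $\R^N\setminus\Omega$. The only difference is that the paper cites \cite[Proposition~2.3]{ElLibro} directly for this Dirichlet Poincar\'e inequality (so in fact one \emph{can} cite the same reference, just a different proposition than the Neumann case's Proposition~3.4), whereas you re-derive it from scratch via the strip-covering argument; the substance and the uniformity in $n$ are the same.
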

			\begin{proof}
				 We can argue as in the proof of Lemma \ref{lemmaeigenvalue} obtaining the result as a consequence of \cite[Proposition 2.3]{ElLibro}.
			\end{proof}
			
			Now, we can see the existence and uniqueness of the solutions to the Dirichlet problem \eqref{1.1.Dir}. 
			\begin{theorem} \label{EUSD}
				Let $W_{Dir}\subset L^2(\R^N)$ be the closed subspace defined in \eqref{SetW_solutions_Dir} and suppose that the condition $(\mathbf{H})$ holds for the non-singular kernels $J,\,R$ and $G$. 
				Then, for each $f\in W_{Dir}$, there exists a unique $u\in W_{Dir}$ satisfying \eqref{1.1.Dir} and being the minimizer of the functional \eqref{1.1.energy.Dir}.
			\end{theorem}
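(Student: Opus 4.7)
The plan is to mimic the argument used for Theorem \ref{EUN} in the Neumann setting, but work directly in the Hilbert space $W_{Dir}$ from \eqref{SetW_solutions_Dir}, which is a closed subspace of $L^2(\R^N)$ and hence itself a Hilbert space with the inherited inner product. There is no longer the need to quotient out constants (as was done for $W$), because the Dirichlet constraint $u \equiv 0$ in $\R^N \setminus \Omega$ already eliminates the constant mode.

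First I would introduce the bilinear form $a_D: W_{Dir} \times W_{Dir} \to \R$ naturally associated with the equation \eqref{1.1.Dir}, namely
\begin{equation*}
\begin{array}{rl}
a_D(u,v) = & \displaystyle \int_{A_n} v(x) \int_{A_n} J(x,y)(u(y)-u(x))\, dy\, dx
+ \int_{A_n} v(x) \int_{B_n} R(x,y)(u(y)-u(x))\, dy\, dx \\[6pt]
& \displaystyle + \int_{B_n} v(x) \int_{A_n} R(x,y)(u(y)-u(x))\, dy\, dx
+ \int_{B_n} v(x) \int_{B_n} G(x,y)(u(y)-u(x))\, dy\, dx,
\end{array}
\end{equation*}
where, because $u,v \in W_{Dir}$ vanish outside $\Omega$, the integration domains $A_n$ and $B_n$ can equivalently be taken to be $A_n \cap \Omega$ and $B_n \cap \Omega$. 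By the symmetry of the kernels $J$, $G$, $R$ assumed in $(\mathbf{H})$, a standard symmetrization yields
\begin{equation*}
a_D(u,v) = \tfrac12 \Phi(A_n,A_n,J)(u,v) + \Phi(A_n,B_n,R)(u,v) + \tfrac12 \Phi(B_n,B_n,G)(u,v),
\end{equation*}
(polarized version of the $\Phi$ defined before \eqref{lambdaN.Dir}), which is manifestly symmetric.

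Next I would verify continuity and coercivity. Continuity follows from the boundedness of the kernels together with Cauchy--Schwarz: each of the four double integrals in $a_D(u,v)$ is bounded by $C\|u\|_{L^2(\Omega)}\|v\|_{L^2(\Omega)}$. Coercivity is precisely the content of Lemma \ref{lemmaeigenvalue.Dir}: for every $u \in W_{Dir}$ one has $a_D(u,u) \geq \lambda_n(\Omega)\|u\|_{L^2(\Omega)}^2 \geq c\|u\|_{L^2(\Omega)}^2$ with $c>0$ independent of $n$. The linear functional $v \mapsto \int_\Omega f(x)v(x)\,dx$ is continuous on $W_{Dir}$ since $f \in W_{Dir} \subset L^2(\R^N)$, so the Lax--Milgram theorem produces a unique $u \in W_{Dir}$ with $a_D(u,v) = \int_\Omega f v\,dx$ for every $v \in W_{Dir}$.

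Finally, I would identify this weak solution with both the strong solution of \eqref{1.1.Dir} and the unique minimizer of $E_n$ in \eqref{1.1.energy.Dir}. Since test functions $v \in W_{Dir}$ are arbitrary in $L^2(\Omega)$ (extended by zero), the variational identity yields \eqref{1.1.Dir} pointwise a.e. in $\Omega$; and because $a_D$ is symmetric and coercive, the functional $E_n(u)= \tfrac12 a_D(u,u) - \int_\Omega f u$ is strictly convex and coercive on $W_{Dir}$, so it admits a unique minimizer, whose Euler--Lagrange equation coincides with the Lax--Milgram identity. The main obstacle, compared to the Neumann proof, is essentially absent: the awkward step in Theorem \ref{EUN} of checking that the equation still holds modulo constants (by verifying the compatibility condition on $f$) is not needed here, precisely because the Dirichlet constraint removes the kernel of $a_D$. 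Thus the proof reduces to invoking Lemma \ref{lemmaeigenvalue.Dir} and Lax--Milgram, exactly as indicated in the excerpt.
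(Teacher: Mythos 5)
Your proposal follows the same route as the paper: introduce the bilinear form associated with \eqref{1.1.Dir} on the Hilbert space $W_{Dir}$, use Lemma \ref{lemmaeigenvalue.Dir} for a lower bound uniform in $n$, and invoke Lax--Milgram; this is exactly what the paper does (it simply points back to the proof of Theorem \ref{EUN}). You also correctly isolate the one structural difference from the Neumann case: the Dirichlet constraint eliminates the constant mode, so no compatibility condition on $f$ or verification modulo constants is needed.

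One sign remark, inherited from the paper's own phrasing in Theorem \ref{EUN}: after symmetrization one actually gets
$a_D(u,u) = -\tfrac12\Phi(A_n,A_n,J)u - \Phi(A_n,B_n,R)u - \tfrac12\Phi(B_n,B_n,G)u$,
so $a_D$ is nonpositive and it is $-a_D$ (rather than $a_D$) that Lemma \ref{lemmaeigenvalue.Dir} shows to be coercive; correspondingly $E_n(u) = -\tfrac12 a_D(u,u) - \int_\Omega f\,u$, not $+\tfrac12 a_D(u,u) - \int_\Omega f\,u$. Lax--Milgram should therefore be applied to $-a_D$. This does not affect the substance of the argument, but be careful with the sign when you write "coercive'' and when you identify the Euler--Lagrange equation with \eqref{1.1.Dir}.
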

			\begin{proof}
			The proof is quite similar to that one given for the Theorem \ref{EUN}. One can see that it is a consequence of Lemma \ref{lambdaN.Dir} applied to the bilinear form $d: W_{Dir} \times W_{Dir} \mapsto \R$ defined by 
		\begin{equation}  \label{bforma.99}
		\begin{array}{rl}
		d(u,v) = & \displaystyle \int_{A_n} v(x) \int_{A_n} J (x,y) (u (y) - u (x)) dy dx 
		%\\[10pt]
		%& \displaystyle 
		+ \int_{A_n} v(x) \int_{B_n} R (x,y) (u (y) - u (x)) dy dx
		\\[10pt]
		& \displaystyle 
		+ \int_{B_n} v(x) \int_{A_n} R (x,y) (u (y) - u (x)) dy dx
		%\\[10pt]
		%& \displaystyle 
		+ \int_{B_n} v(x) \int_{B_n} G (x,y) (u (y) - u (x)) dy dx.
		\end{array}
		\end{equation}
			\end{proof}

		\subsection{Proof of Theorem \ref{theo1.intro.dir}} 
		
			\begin{proof}[Proof of Theorem 
			\ref{theo1.intro.dir}]
				The proof is analogous to that one given to Theorem \ref{theo1.intro}. We briefly describe the steps. First, let us write the variational formulation of \eqref{1.1.Dir}: for all $\varphi\in W_{Dir}$ and $n\geq1$ we have 
				\begin{equation} \label{variationalproblem.Dir}
				\begin{array}{rl}
				& \displaystyle \int_{\Omega} \varphi f(x)dx = \displaystyle \int_{A_n} \varphi(x) \int_{A_n} J (x,y) (u_n (y) - u_n (x)) dy dx 
				+ \int_{A_n} \varphi(x) \int_{B_n} R (x,y) (u_n (y) - u_n (x)) dy dx
				\\[10pt]
				& \displaystyle 
				\qquad\qquad\qquad + \int_{B_n} \varphi(x) \int_{A_n} R (x,y) (u_n (y) - u_n (x)) dy dx
				+ \int_{B_n} \varphi(x) \int_{B_n} G (x,y) (u_n (y) - u_n (x)) dy dx.
				\end{array}
				\end{equation}

				{\textbf{Uniform bounds.}}
				
				It follows from \eqref{variationalproblem.Dir} taking $\varphi=u_n$ and Lemma \ref{lemmaeigenvalue.Dir} shown below that there exists $c>0$ such that
				$
				\|u_n\|_{L^2(\Omega)}\leq c
				$
				for all $n \geq 1$.
				
				{\textbf{Limit of equation \eqref{variationalproblem.Dir} as $n\to \infty$.}} 
				
				Using the fact that the solutions $u_n$ are uniformly bounded, we have that $\chi_{A_n}u_n$ and $\chi_{B_n}u_n$ are uniformly bounded. Therefore, there are $u_A,u_B\in W_{Dir}$ such that, up to subsequences,
				\begin{equation}\label{1099}
				\chi_{A_n}u_n\rightharpoonup u_A\quad\textrm{and}\quad \chi_{B_n}u_n\rightharpoonup u_B
				\end{equation}
				weakly in $W_{Dir}$. Moreover, as in \eqref{convkernel}, we have for $V = J$, $R$ or $G$ that
				\begin{equation}\label{1103}
		\begin{gathered}
		\int_{\R^N}\chi_{A_n}(y) \, V(\cdot -y) dy\to \int_{\R^N} X(y) \, V(\cdot-y) dy \quad \textrm{ and } \\
		\int_{\R^N}\chi_{B_n}(y) \, V(\cdot-y) dy\to \int_{\R^N} (1-X)(y) \, V(\cdot-y) dy
		\end{gathered}
		\end{equation}
		strongly in $L^2(\Omega)$ as $n \to \infty$.

				Next, let us rewrite \eqref{variationalproblem.Dir} as follows:
				\begin{equation*} \label{variationaproblem.Dir.ntoinfty} 
				\begin{array}{rl}
			 \displaystyle \int_{\Omega} \varphi f(x)dx  =&  \displaystyle \int_\Omega\chi_{A_n}(x)\varphi(x) \int_{\R^N}J (x,y)\chi_{A_n}(y)  u_n (y)dy dx -\int_{\R^N}\chi_{A_n}(x) \varphi(x)u_n(x)\int_{\R^N}\chi_{A_n}(y)J(x,y)dy dx\\[10pt]
				& + \displaystyle 
				 \int_{\R^N}\chi_{A_n}(x) \varphi(x) \int_{\R^N} R (x,y)\chi_{B_n}(y) u_n (y) dy dx-\int_{\R^N}\chi_{A_n} \varphi(x)u_n(x)\int_{\R^N}\chi_{B_n}(y)R(x,y)dy dx
				\\[10pt]
				&+ \displaystyle 
				 \int_{\R^N}\chi_{B_n}(x) \varphi(x) \int_{\R^N} R (x,y)\chi_{A_n}(y) u_n (y) dy dx-\int_{\R^N}\chi_{B_n} \varphi(x)u_n(x)\int_{\R^N}\chi_{A_n}(y)R(x,y)dy dx
				\\[10pt]
				&+ \displaystyle \int_\Omega\chi_{B_n}(x)\varphi(x) \int_{\R^N}G (x,y)\chi_{B_n}(y)  u_n (y)dy dx -\int_{\R^N}\chi_{B_n}(x) \varphi(x)u_n(x)\int_{\R^N}\chi_{B_n}(y)G(x,y)dy dx
				\end{array}
				\end{equation*}
				Hence, with the convergences \eqref{1099} and \eqref{1103} in hand, one can proceed as in \eqref{eq660} to get
				\begin{equation*} 
				\begin{array}{l}
				\displaystyle\int_\Omega f(x)\varphi(x) dx= \displaystyle\int_{\R^N}\varphi(x)\int_{\R^N}J(x,y)\left(X(x)u_A(y)-X(y)u_A(x) \right)dydx\\[10pt]
				\qquad +\displaystyle\int_{\R^N}\varphi(x)\int_{\R^N} R(x,y)\left[(1-X(x))u_A(y)-(1-X(y))u_A(x)
				+X(x)u_B(y)-X(y)u_B(x)\right] dydx\\[10pt]
				\qquad +\displaystyle\int_{\R^N}\varphi(x)\int_{\R^N} G(x,y)\left[ (1-X(x))u_B(y)- (1-X(y))u_B(x) \right] dydx.
				\end{array}
				\end{equation*}

				{\textbf{Limit for test functions $\chi_{A_n}\varphi$ and $\chi_{B_n}\varphi$.}}
				
				One can proceed as in \eqref{proofpart3} obtaining  
				\begin{equation*}\label{eqlimitA.Dir}
				\begin{array}{rl}
				\displaystyle \int_\Omega X \varphi f(x)dx = & \displaystyle\int_{\R^N}\varphi(x)\int_{\R^N}J(x,y)\left(X(x)u_A(y)-X(y)u_A(x) \right)dydx \\[10pt]
				& \displaystyle 
				+ \int_{\R^N}  \varphi(x) \int_{\R^N} R (x,y) \left[X(x)u_B (y)-u_A(x)(1-X(y)) \right] dy dx,
				\end{array}
				\end{equation*}
				and 
				\begin{equation*}  
				\begin{array}{rl}
				\displaystyle \int_\Omega (1-X(x))\varphi f(x)dx  = & 
				\displaystyle 
				\int_{\R^N}\varphi(x) \int_{\R^N} R (x,y) \left[u_A (y)(1-X(x)) -u_B(x)X(y) \right]dy dx 
				\\[10pt]
				& \displaystyle 
				+ \int_{\R^N}\varphi(x) \int_{\R^N} G (x,y)\left[ u_B (y)(1-X(x))-u_B(x)(1-X(y))\right]  dy dx
				\end{array}
				\end{equation*}
				for all $\varphi\in W_{Dir}$.
			\end{proof}

			Here we prove Corollary \ref{corExD}. We deal with the particular partition of the $\R^N$ 
			in which take $\chi A_n \to 1$ as $n\to \infty$. 
			
			\begin{proof}[Proof of Corollary \ref{corExD}]				
Since $\chi A_n \to 1$ as $n\to \infty$ we have that $X\equiv 1$ in $\mathbb{R}^N$, therefore
from equation \eqref{limitBn_dir} we get that
$$
\chi_{B_n } u_n \to u_B=0,
$$
and then from \eqref{limitAn_dir} we obtain the limit equation 
				\begin{equation*}
				\displaystyle\int_{\R^N}f(x)\varphi(x)dx=\displaystyle\int_{\R^N}\varphi(x)\int_{\R^N}J(x,y)u_A(y) dy dx-\displaystyle\int_{\R^N}\varphi(x)u_A(x)\int_{\R^N}J(x,y)dy dx
				\end{equation*}
				
				Finally, one can proceed as in the proof of Corollary \ref{corX=0} to get
				$$
				\|u_n-u_A\|_{L^2(\Omega)}^2\to 0, \quad \textrm{ as } n \to \infty.
				$$
			\end{proof}

\subsection{Proof of Theorem \ref{Corrector.Dirichlet}}

The proof of the result concerning correctors for the Dirichlet problem 
follows the same steps as in the proof for the Neumann case, Theorem \ref{Corrector.Neumann}, and hence it is left to the reader.

\vspace{0.5 cm}

{\bf Acknowledgements.} 
The first and last authors (MC and JDR) are partially supported by 
CONICET grant PIP GI No 11220150100036CO
(Argentina), UBACyT grant 20020160100155BA (Argentina), 
Project MTM2015-70227-P (Spain).

The third author (MCP) has been partially supported by CNPq 303253/2017-7 (Brazil).

	\noindent\textbf{Addresses:}

{Monia Capanna and Julio D. Rossi
\hfill\break\indent
CONICET and Departamento  de Matem{\'a}tica, FCEyN,\hfill\break\indent 
Universidad de Buenos Aires, 
\hfill\break\indent  Ciudad Universitaria, Pabellon I, (1428).
Buenos Aires, Argentina.}
\hfill\break\indent
{{\tt moniacapanna@gmail.com, jrossi@dm.uba.ar}}

{Jean C. Nakasato and Marcone C. Pereira
		\hfill\break\indent Dpto. de Matem{\'a}tica Aplicada, IME,
		Universidade de S\~ao Paulo, \hfill\break\indent Rua do Mat\~ao 1010, 
		S\~ao Paulo - SP, Brazil. } \hfill\break\indent {{\tt nakasato@ime.usp.br, marcone@ime.usp.br} \hfill\break\indent {\it
			Web page: }{\tt www.ime.usp.br/$\sim$nakasato, www.ime.usp.br/$\sim$marcone}}

%{Marcone C. Pereira
%		\hfill\break\indent Dpto. de Matem{\'a}tica Aplicada, IME,
%		Universidade de S\~ao Paulo, \hfill\break\indent Rua do Mat\~ao 1010, 
%		S\~ao Paulo - SP, Brazil. } \hfill\break\indent {{\tt marcone@ime.usp.br} \hfill\break\indent {\it
%			Web page: }{\tt www.ime.usp.br/$\sim$marcone}}

\end{document}